\newcommand{\C}{\mathrm{C}}
\newcommand{\R}{\mathbb{R}}
\newcommand{\N}{\mathbb{N}}
\pgfplotsset{compat=1.18} 
\def\MR#1{\href{http://www.ams.org/mathscinet-getitem?mr=#1}{MR#1}}
\newtheorem{theorem}{Theorem}[section]
\newtheorem{proposition}[theorem]{Proposition}
\newtheorem{corollary}[theorem]{Corollary}
\newtheorem{lemma}[theorem]{Lemma}
\theoremstyle{remark}
\theoremstyle{definition}
\numberwithin{equation}{section}
\numberwithin{table}{section}
\numberwithin{figure}{section}
\title[Solving clustered low-rank semidefinite programs]{Solving clustered low-rank semidefinite programs arising from polynomial optimization}
\author{David de Laat}
\address{D.\ de Laat, Delft Institute of Applied Mathematics\\
Delft University of Technology\\ Delft, The Netherlands} \email{d.delaat@tudelft.nl}
\author{Nando Leijenhorst}
\address{N.\ M.\ Leijenhorst, Delft Institute of Applied Mathematics\\
Delft University of Technology\\ Delft, The Netherlands} \email{n.m.leijenhorst@tudelft.nl}
\date{August 21, 2024} 
\begin{document}

\maketitle

\begin{abstract}
We study a primal-dual interior point method specialized to clustered low-rank semidefinite programs requiring high precision numerics, which arise from certain multivariate polynomial (matrix) programs through sums-of-squares characterizations and sampling. We consider the interplay of sampling and symmetry reduction as well as a greedy method to obtain numerically good bases and sample points. We apply this to the computation of three-point bounds for the kissing number problem, for which we show a significant speedup. This allows for the computation of improved kissing number bounds in dimensions $11$ through $23$. The approach performs well for problems with bad numerical conditioning, which we show through new computations for the binary sphere packing problem.
\keywords{Semidefinite programming \and primal-dual interior point method \and low-rank constraints \and symmetry reduction \and packing problems \and sum-of-squares polynomials}
\end{abstract}

\section{Introduction}

In discrete geometry, many of the best-known bounds on the optimal cardinality of spherical codes  \cite{delsarte_spherical_1977,MR2415397,bachoc_new_2007,mittelmann_high_2010,machado_improving_2018}, optimal sphere packing densities \cite{cohn_new_2003,de_laat_upper_2014}, optimal densities for packings with nonspherical shapes \cite{MR3679945}, and optimal ground state energies  \cite{MR1181534,MR2947943,de_laat_moment_2019} are obtained using linear programming and semidefinite programming bounds. Similar approaches are used in analytic number theory \cite{chirre_pair_2020} and the conformal bootstrap \cite{simmons-duffin_semidefinite_2015}. 

These bounds are derived using constraints on $k$-point correlations for small $k$, which leads to conic optimization problems involving positive semidefinite matrix variables as well as polynomial inequality constraints in a modest number of  variables. Such problems are then solved using semidefinite programming via sums-of-squares characterizations for the polynomial constraints. 

For applications in discrete geometry, we often need solutions of rather high precision, so we need to use \emph{second-order} interior point methods, which have linear convergence. Moreover, due to the orthogonal bases in the formulations of the problems, the problems have seemingly unavoidable bad numerical conditioning. In practice, computations are therefore performed using the general purpose semidefinite programming solvers \texttt{SDPA-QD} and \texttt{SDPA-GMP} \cite{yamashita_high-performance_2010}, which both use high-precision numerics, and computations regularly take weeks to complete (see, e.g., \cite{machado_improving_2018}).

When the constraint matrices defining a semidefinite program are of low rank, this can be exploited in the interior-point method 
\cite{lofberg_coefficients_2004,zhang_liu_low-rank_2007}, which has been used in the solvers \cite{benson_dsdp5_nodate,anjos_implementation_2012,simmons-duffin_semidefinite_2015}. As observed by Parrilo and L\"ofberg \cite{lofberg_coefficients_2004}  rank $1$ constraints naturally appear from sum-of-squares characterizations when sampling is used as opposed to coefficient matching. 

In \cite{simmons-duffin_semidefinite_2015,landry_scaling_2019} Simmons-Duffin develops a high-precision solver that uses this rank $1$ structure. The solver exploits clustering in the constraints, supports parallelization, and works very well in practice. The implementation however has been designed for problems in the conformal bootstrap and only supports a specific form of semidefinite programs that arise from univariate polynomial optimization problems. Inspired by this success, the goal of this paper is to explore the use of low-rank constraints for solving optimization problems such as those arising in discrete geometry.

That low-rank constraints can be exploited is not obvious, since problems in discrete geometry often have large symmetry groups, and symmetry reduction leads to constraints on the positive semidefinite matrix variables which may be of large rank and which are no longer sparse due to the sampling approach. Moreover, it also leads to multivariate polynomial inequality constraints which are invariant under certain group actions, and exploiting this leads to constraints matrices of rank greater than one.

Our first contribution is the implementation of a high-precision, primal-dual, interior-point solver that can exploit more general low-rank structures for the constraint matrices than only rank $1$ constraints.\footnote{See \href{https://github.com/nanleij/ClusteredLowRankSolver.jl}{github.com/nanleij/ClusteredLowRankSolver.jl} for  source code and documentation.} The solver is written in the high-level language Julia \cite{bezanson_julia_2017}, and is implemented in such a way that fast matrix-matrix multiplication can be exploited (using Arb \cite{johansson_arb_2017}). It comes with a user-friendly interface for modeling problems involving both low-rank semidefinite constraints, as well as low-rank polynomial constraints, and it can automatically convert between these. Similar to \cite{simmons-duffin_semidefinite_2015} the solver can exploit clustering of the constraints (where clusters of positive semidefinite matrix blocks are linked through free variables), and the solver has a custom parallelization approach tailored to problems where we have fewer clusters, but many samples per cluster. Note that many of the features of our implementation are already present in an existing solver, but none of the existing solvers implements all of them at the same time.

Secondly, we study the interplay of sampling and symmetry reduction, which has not been done before. We give necessary and sufficient conditions on the sample set in the presence of symmetry. We furthermore show empirically that a greedy approach to finding good samples, and transforming the bases to be orthogonal with respect to these samples, works well in this setting. We also investigate an approach where we remove dense constraint matrices by parameterizing them by free variables, which allows for more clustering; see Section~\ref{sec:kissing_number}.

Our third contribution is to emperically show the speed and stability of the approach described in the previous paragraphs by considering two applications from discrete geometry. First, we consider the three-point bound for the kissing number problem \cite{bachoc_new_2007}. 
We consider this problem because it involves invariant, multivariate polynomial inequality constraints and because it has additional positive semidefinite matrix variables for which the clustering  approach we use plays a role.  Moreover, it is an important problem in discrete geometry for which extensive computations have already been performed. We show a significant speedup compared to previous computations performed with \texttt{SDPA-GMP}, by a factor $28$ for the most extensive computations previously performed. This allows us to perform computations using polynomials of degree $40$ as opposed to degree $32$ (for which extrapolation shows this approach would have been faster by a factor $40$), which also results in improved kissing number bounds in dimension $11$-$23$.

Then we consider the adaptation of the Cohn-Elkies bound for the binary sphere packing problem \cite{de_laat_upper_2014}. We show this bound can be written using matrix polynomial inequality constraints, and we use this to perform new computations which highlight the numerical stability of the solver. These computations show the bounds are not necessarily convex, can beat Florian's bound in dimensions $2$, and may converge to the optimal density in dimensions $8$ and $24$ as the ratio of the radii goes to $0$.

These two representative examples show that exploiting low-rank constraints can be very beneficial for the $k$-point bounds arising in discrete geometry. We expect this will not just speed up existing computations, but will also allow for tackling more difficult problems which were previously out of reach. 

\section{A specialized interior point method}
\label{sec:algorithm}

In this section we give an exposition of the primal-dual interior-point method for semidefinite programming as used by \texttt{SDPB} \cite{simmons-duffin_semidefinite_2015}, which in turn builds on \texttt{SDPA} \cite{yamashita_high-performance_2010}. We generalize the method to a very general low-rank structure (see \eqref{eq:constraintmatrices} and \eqref{eq:nonsymmetric}), and we show how this can be exploited in the computation of the Schur complement matrix in a way that fast matrix-matrix multiplication can be employed (which is especially beneficial because we use high-precision arithmetic).  
Because our applications consist of problems in extremal geometry, which typically have few clusters and a large number of constraints within a cluster, our  parallelization strategy is different from  \texttt{SDPB}.
The interior point method uses the $XZ$ search direction \cite{MR1387330,MR1430559,MR1462060}, the predictor-corrector step due to Mehrotra \cite{mehrotra_implementation_1992}, and the Lanczos algorithm for computing step lengths \cite{toh_note_2002}. 

When translating polynomial constraints into semidefinite constraints (see Section~\ref{sec:MPMP}), one obtains for each polynomial constraint a number of semidefinite constraints which use the same positive semidefinite matrix variables. By using sampling  it is possible to keep the rank of the constraint matrices low \cite{lofberg_coefficients_2004}. Together, this leads to a clustered low-rank semidefinite program, with clusters of constraints using the same positive semidefinite variables, and low-rank constraint matrices. We assume these clusters are connected only through free scalar variables.

We therefore consider semidefinite programs with $J$ clusters of the form
\begin{maxi}
  {}{\sum_{j=1}^J\langle C^{j}, Y^{j}\rangle  + \langle c, y\rangle }{}{}
  \label{pr:sdp}
  \addConstraint{\big\langle A_*^{j}, Y^{j}\big\rangle + B^jy}{= b^j,\quad}{ j=1,\ldots,J}
  \addConstraint{Y^{j}}{\succeq 0,}{j=1,\ldots,J,}
\end{maxi}
where we optimize over the vector of free variables $y$ and the positive semidefinite block matrices $Y^j = \mathrm{diag}(Y^{j,1}, \ldots, Y^{j,L_j})$. Here $\langle c, y \rangle$ is the Euclidean inner product, and we use the notation
\[
\langle A_*^{j}, Y^{j}\rangle = \big( \langle A_t^{j}, Y^{j}\rangle \big)_{t \in T_j},
\]
where $\langle A_t^{j}, Y^{j}\rangle$ is the trace inner product. 

The semidefinite program is defined by the symmetric matrices $C^j$ and $A_t^j$, the matrices $B^j$, and the vectors $c \in \R^N$ and $b^j \in \R^{T_j}$. We assume the matrix $A_t^j$ is of the form
\begin{equation}
\label{eq:constraintmatrices}
A_t^j = \bigoplus_{l=1}^{L_j} \sum_{r,s=1}^{R_j(l)} A_t^{j,l}(r, s) \otimes E_{r,s}^{R_j(l)},
\end{equation}
with $A_t^{j,l}(r, s)$ a matrix of low rank and $A_t^{j,l}(r, s)^{\sf T} =  A_t^{j,l}(s, r)$. Here $E_{r,s}^n$ is the $n \times n$ matrix with a one at position $(r,s)$ and zeros otherwise. 

Internally, we represent the  blocks $A_t^{j,l}(r,s)$ in the form
\begin{equation}\label{eq:nonsymmetric}
\sum_i \lambda_i v_i w_i^{\sf T},
\end{equation}
where we do not require the rank $1$ terms to be symmetric (even if the block $A_t^{j,l}(r,s)$ itself is symmetric). Allowing for nonsymmetric matrices in the rank $1$ decomposition is more general than what is done in \cite{simmons-duffin_semidefinite_2015,anjos_implementation_2012,benson_dsdp5_nodate}, and as explained in Section~\ref{sec:kissing_number} this can be important for performance.

We interpret \eqref{pr:sdp} as the dual of the semidefinite program
\begin{mini}
  {}{\sum_{j=1}^J \langle b^j, x^j \rangle}{}{}
  \label{pr:primal_sdp}
  \addConstraint{\sum_{j=1}^J (B^j)^{\sf T} x^j}{ = c}{}
  \addConstraint{X^{j}= \sum_{t \in T_j} x_t^j A_t^{j} - C^{j}}{\succeq 0,\quad}{j=1,\ldots,J,}
\end{mini}
where we optimize over the vectors of free variables $x^j$ and the positive semidefinite block matrices $X^j = \mathrm{diag}(X^{j,1}, \ldots, X^{j,L_j})$. 

Using the notation $X$ for the block matrix $\mathrm{diag}(X^1,\ldots,X^J)$ and $Y$ for the block matrix $\mathrm{diag}(Y^1,\ldots,Y^J)$, the duality gap for primal feasible $(x,X)$ and dual feasible $(y,Y)$ is given by
\[
b^{\sf T}x - \langle C,Y\rangle - c^{\sf T} y = \langle X, Y \rangle.
\]
We assume strong duality holds, so that if $(x,X)$ and $(y,Y)$ are optimal, then $\langle X, Y \rangle = 0$, and hence $XY = 0$.

The primal-dual algorithm starts with infeasible solutions $(x,X)$ and $(y,Y)$, where $X$ and $Y$ are positive definite. At each iteration, a Newton direction $(dx,dX,dy,dY)$ is computed for the system of primal and dual linear constraints and the centering condition $XY = \beta \mu I$. Here $\mu$ is the surrogate duality gap $\langle X, Y \rangle$ divided by the size of the matrices, and $\beta$ is a solver parameter between $0$ and $1$. Then $(x,X,y,Y)$ is replaced by $(x+s dx, X+sdX, y+sdy, Y+sdY)$ for some step size $s$ that ensures the matrices stay positive definite. Here we only discuss the process of finding the search direction since exploiting the special form \eqref{eq:constraintmatrices} happens in this part of the algorithm. See \cite{simmons-duffin_semidefinite_2015} for more details on the remaining parts of the algorithm.

To compute the Newton search direction we replace the variables $(x,X,y,Y)$ by $(x+dx,X+dX,y+dy,Y+dY)$ in the primal and dual constraints, which gives
\begin{align}
    X^{j} + dX^{j} &= \sum_{t\in T_j} (x_t^j+dx_t^j) A_t^{j} - C^{j},\label{eq:dX}\\
    \sum_{j=1}^J (B^j)^{\sf T}(x^j+dx^j) &= c,\label{eq:dx}\\
    \big\langle A_*^{j},Y^{j}+dY^{j}\big\rangle+ B^j(y+dy) &= b^j\label{eq:dy}.
\end{align}
Then we apply the same substitution in the centering condition and linearize to get 
\begin{equation}
X^jY^j + X^j dY^j +dX^j Y^j = \mu I.\label{eq:cent}
\end{equation}
Substituting the expression for $dX^j$ from \eqref{eq:dX} into \eqref{eq:cent} and then the expression for $dY^j$ from \eqref{eq:cent} into \eqref{eq:dy} gives
\begin{align*}
&\Big\langle A_*^j, Y^j + (X^j)^{-1} \Big( \mu I - X^jY^j - \Big(\sum_{t \in T_j}(x_t^j+dx_t^j)A_t^j - C^j - X^j\Big)Y^j \Big)\Big\rangle\\
&\quad + B^j(y+dy) = b^j.
\end{align*}
Together with constraint \eqref{eq:dx} (which is responsible for the last row in the system) this can be written as the following linear system in $dx$ and $dy$:
\begin{align*}
    \begin{bmatrix}
        S^1 & 0 & \cdots & 0 & -B^1\\
        0 & S^2 & \cdots & 0 & -B^2\\
        \vdots & \vdots & \ddots & \vdots & \vdots \\
        0 & 0 & \hdots & S^J & -B^J \\
        (B^1)^{\sf T} & (B^2)^{\sf T} & \hdots & (B^J)^{\sf T} & 0
    \end{bmatrix}
    \begin{bmatrix}
    dx^1 \\ dx^2 \\ \vdots \\
    dx^J \\ dy 
    \end{bmatrix} 
    = 
    \begin{bmatrix}
    - b^1 - \langle A_*^{1}, Z^{1} - Y^1\rangle + B^1 y\\
    - b^2 - \langle A_*^{2}, Z^{2} - Y^2\rangle + B^2 y
    \\ \vdots \\
    - b^J - \langle A_*^{J}, Z^{J} - Y^J\rangle + B^J y\\
    c - \sum_{j=1}^J (B^j)^{\sf T} x^j
    \end{bmatrix},
\end{align*}
Here $Z^j = (X^j)^{-1}((\sum_t x_t^j A_t^j-C^j)Y^j-\mu I)$ and the blocks $S^j$ that form the Schur complement matrix $S = \mathrm{diag}(S^1, \dots, S^J)$ have entries 
\[
S^j_{ab} = \big\langle A_a^{j},  (X^{j})^{-1} A_b^{j} Y^{j}\big\rangle.
\]

The above system can be solved to obtain $dx$ and $dy$. From this $dX$ and $dY$ can be computed, where instead of computing $dY$ as $X^{-1}(\mu I - XY - dX Y)$ we set
\[
dY = \frac{X^{-1}(\mu I - XY - dX Y) + (X^{-1}(\mu I - XY - dX Y))^{\sf T}}{2} 
\]
so that $Y$ stays symmetric.

In general, the computation of the Schur complement matrix $S$ and solving the above linear system are the main computational steps.

Due to the clusters, the matrix $S$ is block-diagonal, so that the Cholesky factorization $S = LL^{\sf T}$ can be computed blockwise. By using the decomposition
\begin{equation*}
    \begin{bmatrix}
    S & -B \\
    B^{\sf T} & 0
    \end{bmatrix} = 
    \begin{bmatrix}
    L & 0 \\
    B^{\sf T}L^{\sf -T} & I
    \end{bmatrix}
    \begin{bmatrix}
    I & 0 \\
    0 & B^{\sf T} L^{\sf -T}L^{-1} B
    \end{bmatrix}
    \begin{bmatrix}
    L^{\sf T} & -L^{-1}B\\
    0 & I
    \end{bmatrix},
\end{equation*}
we can solve the system by solving several triangular systems. The inner matrix $B^{\sf T}L^{\sf -T} L^{-1} B$ is positive definite, so we can again use a Cholesky decomposition. 

Due to the low-rank constraint matrices, we can compute the blocks $S^j$ more efficiently. Suppose for simplicity the constraint matrices are of the form
\[
A^{j,l}_t = \sum_{r=1}^{\eta_t^{j,l}} \lambda_{t,r}^{j,l} v_{t,r}^{j,l} (w_{t,r}^{j,l})^{\sf T},
\]
where $\eta_t^{j,l}$ is the rank of the matrix $A^{j,l}_t$. 
Then we can write
\begin{align*}
    S^j_{ab} &= \sum_{l=1}^{L_j}\langle A_a^{j,l}, (X^{j,l})^{-1} A_b^{j,l} Y^{j,l} \rangle \\
    &= \sum_{l=1}^{L_j} \sum_{r_1=1}^{\eta_a^{j,l}}\sum_{r_2=1}^{\eta_b^{j,l}} \lambda_{a,r_1}^{j,l} \lambda_{b,r_2}^{j,l} \left\langle v_{a,r_1}^{j,l}(w_{a,r_1}^{j,l})^{\sf T}, (X^{j,l})^{-1}v_{b,r_2}^{j,l}(w_{b,r_2}^{j,l})^{\sf T} Y^{j,l}\right\rangle \\
    &= \sum_{l=1}^{L_j} \sum_{r_1=1}^{\eta_a^{j,l}}\sum_{r_2=1}^{\eta_b^{j,l}} \lambda_{a,r_1}^{j,l} \lambda_{b,r_2}^{j,l} \Big((w_{a,r_1}^{j,l})^{\sf T} (X^{j,l})^{-1}v_{b,r_2}^{j,l}\Big)\Big((w_{b,r_2}^{j,l})^{\sf T} Y^{j,l}v_{a,r_1}^{j,l}\Big),
\end{align*}
which shows we can compute $S_{ab}^j$ efficiently by precomputing the  bilinear pairings 
\[
(w_{a,r_1}^{j,l})^{\sf T} (X^{j,l})^{-1}v_{b,r_2}^{j,l} \quad \text{and} \quad (w_{b,r_2}^{j,l})^{\sf T} Y^{j,l}v_{a,r_1}^{j,l}.
\]

In the implementation, we use similar techniques for the more general constraint matrices of the form  \eqref{eq:constraintmatrices}. Since we use high-precision arithmetic, it is beneficial to use matrix-matrix multiplication with subcubic complexity, and therefore we compute the above pairings efficiently by first creating the matrices $V^{j,l}$ and $W^{j,l}$ with the columns $v_{a,r}^{j,l}$ and $w_{a,r}^{j,l}$, respectively, and then performing fast matrix multiplication to compute 
\[
(W^{j,l})^{\sf T}(X^{j,l})^{-1} V^{j,l} \quad \text{and} \quad (W^{j,l})^{\sf T}Y^{j,l} V^{j,l}.
\]

Due to the block structures, the algorithm is relatively easy to parallelize. The best way to parallelize, however, depends on both the problem characteristics and the type of computing system used. The \texttt{SDPB} solver specializes in problems with large amounts of clusters with similar-sized blocks, which can be distributed over different nodes in a multi-node system in which there is communication latency between the nodes \cite{simmons-duffin_semidefinite_2015,landry_scaling_2019}.

Problems in discrete geometry typically consist of few clusters, and have a large variation in both the number of blocks per cluster and in the size of the blocks; see for example Section \ref{sec:kissing_number}. The majority of the workload can be due to a single cluster, hence distributing clusters over nodes in a multi-node system is not a good parallelization strategy in this case. Instead, we focus on distributing the workload over multiple cores in a single-node shared-memory system.

Most of the matrix operations can be done block-wise. We distribute the blocks over the cores such that the workload for each core is about equal. Since the matrices in the products $(W^{j,l})^{\sf T}(X^{j,l})^{-1} V^{j,l}$ and $(W^{j,l})^{\sf T}Y^{j,l} V^{j,l}$ can be very large, we split these multiplications into several parts which we distribute over the cores.

\section{Polynomial matrix programs}
\label{sec:MPMP}

For univariate and multivariate problems, sums-of-squares characterizations, and in particular Putinar's characterization \cite{putinar_positive_1993}, are commonly used to model polynomial constraints as semidefinite constraints. Parrilo and L\"ofberg \cite{lofberg_coefficients_2004} show that by using sampling as opposed to coefficient matching we get a semidefinite programming formulation with low-rank constraint matrices. There has also been research into writing polynomial \emph{matrix} constraints as semidefinite constraints \cite{scherer_matrix_2006,klep_pure_2010}. In this expository section, we consider the combination of multivariate polynomial matrix programs with the sampling approach and show precisely what kind of low-rank constraints appear when reducing these to semidefinite programs.

Let
\[
\mathcal S(G) = \Big\{x \in \R^n : g(x) \geq 0 \text{ for all } g \in G\Big\}
\] 
be the semialgebraic set generated by a finite set of polynomials $G \subseteq \R[x]$ in $n$ variables. Fix $m \in \N$ and define the quadratic module
\[
\mathcal M(G) = \mathrm{cone}\Big\{gp^{\sf T}p : g\in G \cup \{1\},\, p \in \R[x]^{m \times m} \Big\}.
\]
We say $\mathcal M(G)$ is Archimedean if for every $p \in \R[x]^{m \times m}$ there is a $c \in \N$ such that $cI - p^{\sf T}p \in \mathcal M(G)$. As shown in \cite{klep_pure_2010}, this is equivalent to the condition that there is a $c \in \N$ such that $(c-\sum_i x_i^2)I \in \mathcal M(G)$. Intuitively, $\mathcal M(G)$ being Archimedean gives an algebraic certificate for the compactness of $\mathcal S(G)$. A polynomial matrix $f \in \R[x]^{m\times m}$ is said to be positive (semi)definite on $D \subseteq \R^n$ if the matrix $f(x)$ is positive (semi)definite for every $x\in D$. This is denoted by $f \succ 0$ ($f\succeq 0$) on $D$.

A polynomial matrix program is an optimization problem of the form
\begin{maxi}
{}{\langle b, y\rangle}{}{}
\label{polmatrprog}
\addConstraint{P_0^j(x) + \sum_{i=1}^N y_i P_i^j(x)}{\succeq 0 \text { on } \mathcal S(G_j),\quad }{j=1,\ldots,J},
\end{maxi}
where we optimize over the vector $y \in \R^N$. The problem is defined by the sets $G_1,\ldots,G_J$ (where each of these sets may consist of polynomials in a different number of variables), the matrix polynomials $P_i^j$, and the vector $b$. 
In \cite{simmons-duffin_semidefinite_2015} the special case with $n=1$ and $G_j = \{x\}$ is considered.

For the $m=1$ case, positivity constraints on a set $\mathcal S(G)$ are modeled using weighted sums-of-squares polynomials. Such polynomials are trivially nonnegative on $\mathcal S(G)$, and by a theorem of Putinar \cite{putinar_positive_1993} one can prove convergence when increasing the maximum degree of the sum-of-squares polynomials, assuming $\mathcal M(G)$ is Archimedean. A similar approach can be used for polynomial matrix programs. For this, we need a generalization of Putinar's theorem for matrix polynomials by Hol and Scherer \cite{scherer_matrix_2006} (with a different proof given by Klep and Schweighofer in \cite{klep_pure_2010}).

\begin{theorem}[{\cite{scherer_matrix_2006,klep_pure_2010}}]
\label{thm:klep}
Let $f \in \mathbb{R}[x]^{m \times m}$ and  $G\subseteq \mathbb{R}[x]$ finite. Suppose $\mathcal M(G)$ is Archimedean. If $f \succ 0$ on $\mathcal S(G)$, then $f \in \mathcal M(G)$.
\end{theorem}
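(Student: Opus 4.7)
The plan is to reduce the matrix-valued statement to the scalar Putinar Positivstellensatz by introducing auxiliary variables. Introduce a fresh vector $u=(u_1,\ldots,u_m)$ and consider the scalar polynomial
\[
P(x,u) = u^{\sf T} f(x) u \in \R[x,u].
\]
The hypothesis $f\succ 0$ on $S(G)$ is equivalent to $P(x,u)>0$ on the compact set $\tilde S = S(G)\times S^{m-1}$. Let $\tilde G = G\cup\{1-\|u\|^2,\,\|u\|^2-1\}$ so that $S(\tilde G)=\tilde S$, and note that the scalar quadratic module generated by $\tilde G$ is archimedean: applying the matrix archimedean hypothesis with $p$ a standard basis vector shows $N-x_i^2$ lies in the scalar module for large $N$, and the two extra generators bound $u$.

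By the scalar Putinar theorem applied to $P$, there exist sums of squares $\sigma_0,\sigma_g\in\R[x,u]$ and a polynomial $h\in\R[x,u]$ with
\[
P(x,u) = \sigma_0(x,u) + \sum_{g\in G} g(x)\,\sigma_g(x,u) + (\|u\|^2-1)\,h(x,u).
\]
The remaining task is to convert this scalar certificate back into a matrix certificate witnessing $f\in M(G)$. Since $P$ is $u$-homogeneous of degree $2$, the idea is to project the right-hand side onto its $u$-homogeneous component of degree $2$ and absorb the $(\|u\|^2-1)$ term. After a suitable homogenization, each SOS generator $q_k(x,u)^2$ can be rewritten, modulo controlled multiples of $\|u\|^2-1$, as a sum of squares of polynomials that are $u$-homogeneous of degree $1$; such a pure degree-$2$-in-$u$ sum of squares has the form $\sum_k (u^{\sf T} p_k(x))^2 = u^{\sf T}\bigl(\sum_k p_k(x)p_k(x)^{\sf T}\bigr)u$, so the corresponding matrix polynomial lies in $M(G)$. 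Inverting the linear bijection $M\mapsto u^{\sf T} M u$ between symmetric matrix polynomials in $x$ and $u$-homogeneous degree-$2$ polynomials then expresses $f$ in the desired form.

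The main obstacle I expect is the careful homogenization and reduction modulo $\|u\|^2-1$: one must ensure that the cross terms produced when splitting an SOS into its $u$-homogeneous components can be absorbed back into legitimate SOS certificates without destroying the matrix interpretation. An alternative route that sidesteps this combinatorial bookkeeping is the argument of Klep and Schweighofer, in which one argues by contradiction using Hahn-Banach separation to produce a matrix-valued linear functional that is nonnegative on $M(G)$ but nonpositive on $f$, then invokes a GNS-type construction to obtain commuting bounded self-adjoint operators with joint spectrum contained in $S(G)$ (using archimedeanity to establish the norm bound), and finally derives a contradiction with $f\succ 0$ via the spectral theorem applied to the resulting spectral measure. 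Either way, the scalar Putinar theorem is the essential input and the rest is algebraic or functional-analytic bookkeeping.
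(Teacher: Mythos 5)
The paper does not prove this theorem; it imports it from Scherer--Hol and Klep--Schweighofer, so the only comparison available is with the cited proofs. Your closing paragraph is an accurate summary of the Klep--Schweighofer argument (separation, a GNS-type construction, archimedeanity for the norm bound, the spectral theorem), but as written it names the ingredients of the known proof rather than supplying one. The substantive content of your proposal is therefore the scalarization route, and that route has a genuine gap.

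The setup is fine: $P(x,u)=u^{\sf T}f(x)u$ is strictly positive on $S(\tilde G)=S(G)\times S^{m-1}$, the module generated by $\tilde G$ is archimedean, and scalar Putinar yields $P=\sigma_0+\sum_g g\,\sigma_g+(\|u\|^2-1)h$. The problem is that the extraction step is not ``bookkeeping''; it is the entire difficulty, and the reduction you sketch fails. The $u$-homogeneous degree-$2$ component of a square $q(x,u)^2$ is $q_1^2+2q_0q_2$ (with $q_j$ the degree-$j$-in-$u$ part), and the cross term need not be a sum of squares: for $q=1-u_1^2$ the degree-$2$ component is $-2u_1^2$. Reducing modulo $\|u\|^2-1$ does not repair this, and in fact no linear positive projection back to matrices can recover $f$. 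The natural candidate $p\mapsto\int_{S^{m-1}}p(x,u)\,uu^{\sf T}\,d\mu(u)$ does send squares to matrix sums of squares and annihilates $(\|u\|^2-1)h$, but using $\int u_iu_ju_ku_l\,d\mu=(\delta_{ij}\delta_{kl}+\delta_{ik}\delta_{jl}+\delta_{il}\delta_{jk})/(m(m+2))$ one finds that it maps the left-hand side to $\tfrac{1}{m(m+2)}\bigl(\mathrm{tr}(f)\,I+2f\bigr)$, not to $f$; and the unique $O(m)$-equivariant kernel normalized to return $f$ itself, namely $\tfrac{m(m+2)}{2}uu^{\sf T}-\tfrac{m}{2}I$, is not positive semidefinite, so it destroys the sum-of-squares structure of the other terms. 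Since $M(G)$ is a cone and not closed under subtraction, concluding $\mathrm{tr}(f)\,I+2f\in M(G)$ (which this argument does give) does not yield $f\in M(G)$. The scalar certificate genuinely carries less information than the matrix certificate, which is precisely why Scherer--Hol and Klep--Schweighofer give direct proofs; to complete your route you would need a substantially new idea at the extraction step, not a homogenization argument.
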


Similar to the non-matrix case, the requirement $f \succ 0$ can be weakened to $f \succeq 0$ when considering the univariate case where $\mathcal S(G)$ is $\R$, $\R_{\geq a}$, or $[a,b]$. In addition, $\mathcal M(G)$ is not required to be Archimedean in that case.

To state the relaxed problem, we consider the truncated quadratic module generated by $G$:
\[
\mathcal M^d(G) = \mathrm{cone}\Big\{gp^{\sf T}p : g\in G \cup \{1\},\, p \in \mathbb{R}[x]^{m \times m},\, \deg(gp^{\sf T}p) \leq d\Big\}.
\]
This gives 
\begin{maxi}
{}{\langle b, y\rangle}{}{}
\label{pr:MPMP_relax}
\addConstraint{P_0^j + \sum_{i=1}^N y_i P_i^j}{\in \mathcal M^d(G_j),\quad }{j=1,\ldots,J}.
\end{maxi}

Let $p^*$ and $p^*_{d}$ denote the optimal values of problem \eqref{polmatrprog} and \eqref{pr:MPMP_relax}, respectively. For all $d$ we have $p^*_{d}\leq p^*_{d+1} \leq p^*$ and the following corollary whose proof is standard shows convergence under mild conditions.

\begin{corollary}
Suppose \eqref{polmatrprog} is strictly feasible and $\mathcal M(G_j)$ is Archimedean for every $j$.  Then
$p^*_{d} \to p^*$ as $d \to \infty$.
\end{corollary}

It follows from the next lemma that we can model the elements in $\mathcal M^d(G)$ using positive semidefinite matrices. Let $b_d(x)$ be a vector whose elements form a basis for the polynomials of degree at most $d$.
\begin{lemma}
For $f\in \R[x]^{m \times m}$ with $deg(f) = 2d$ we have
$
f = p^{\sf T}p
$
for some $p \in \R[x]^{t \times m}$
if and only if 
\[
f =  (b_d(x) \otimes I_m)^{\sf T}Y (b_d(x) \otimes I_m)
\]
for some positive semidefinite matrix $Y$.
\end{lemma}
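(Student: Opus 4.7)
The statement is the matrix-polynomial analogue of the classical Gram-matrix characterization of sum-of-squares polynomials. The key parametrization I would exploit is that the map $w \mapsto (b_d(x)\otimes I_m)^{\sf T} w$ sends $\R^{Nm}$ bijectively onto the space of polynomial vectors in $\R[x]^m$ of degree at most $d$, where $N$ is the length of $b_d(x)$. Consequently, $L \mapsto (b_d(x)\otimes I_m)^{\sf T} L$ parametrizes all $m\times t$ polynomial matrices whose entries have degree at most $d$.

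For the ``if'' direction, I would factor $Y = LL^{\sf T}$ (by Cholesky or the spectral theorem), with $L \in \R^{Nm \times t}$ for some $t$, and set $p := (b_d(x)\otimes I_m)^{\sf T} L$. Then $p \in \R[x]^{m\times t}$, and a direct computation gives
\[
pp^{\sf T} = (b_d(x)\otimes I_m)^{\sf T} LL^{\sf T} (b_d(x)\otimes I_m) = f.
\]

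For the ``only if'' direction, the first task is to show that every entry of $p$ has degree at most $d$. The $i$-th diagonal entry of $f = pp^{\sf T}$ equals $\sum_j p_{ij}(x)^2$, which is a scalar sum of squares of real polynomials of total degree at most $2d$. A standard positivity argument then applies: if the top homogeneous parts of the $p_{ij}$ did not all vanish above degree $d$, then the sum of their squares would be a nonzero homogeneous polynomial of degree exceeding $2d$ (being a nonvanishing sum of squares of real polynomials), contradicting $\deg(f)\le 2d$. Hence $\deg(p_{ij})\le d$ for all $i,j$, so each column of $p$ lies in the image of the parametrization above. Collecting the corresponding coefficient vectors into a single matrix $L \in \R^{Nm \times t}$ yields $p = (b_d(x)\otimes I_m)^{\sf T} L$, and $Y := LL^{\sf T}$ is positive semidefinite and satisfies the required identity.

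The only step that is not routine matrix algebra is the degree bound in the ``only if'' direction, since nothing in the hypothesis explicitly restricts $\deg(p)$; the positivity/non-cancellation argument applied to the diagonal entries is the crucial ingredient, and the rest follows by unwinding the tensor-product identity for $b_d(x)\otimes I_m$.
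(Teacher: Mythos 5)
Your proof is correct and follows essentially the same route as the paper: both directions rest on factoring a positive semidefinite $Y$ as $LL^{\sf T}$ and parametrizing polynomial matrices of entrywise degree at most $d$ as $(b_d(x)\otimes I_m)^{\sf T}L$. The one genuine addition is your explicit degree-bound argument via non-cancellation of leading forms in the diagonal sums of squares, which closes a small gap: the paper writes $p^{\sf T} = Z(b_d(x)\otimes I_m)$ directly even though the hypothesis bounds only $\deg(f)$, not $\deg(p)$.
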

\begin{proof}
Let $\delta$ be the length of $b_d(x)$. Then $p = Z (b_d(x) \otimes I_m)$ for some $Z \in \R^{t \times m\delta}$, and hence  $p^{\sf T}p = (b(x) \otimes I_m)^{\sf T} Z^{\sf T}Z (b(x) \otimes I_m)$. Now note that $Y = Z^{\sf T}Z$ is positive semidefinite and any positive semidefinite matrix admits such a decomposition.
\qed\end{proof}

The above lemma shows the elements of $\mathcal M^d(G_j)$ are of the form
\[
    \sum_{g \in G_j \cup \{1\}} g(x) (b_{d-\lfloor \deg(g)/2\rfloor}(x) \otimes I_{m_j})^{\sf T}Y_g^j (b_{d-\lfloor \deg(g)/2\rfloor}(x) \otimes I_{m_j}),
\]
for positive semidefinite matrices $Y_g^j$.
The entry on row $r$ and column $s$ is equal to 
\begin{align*}
&\sum_{g \in G_j \cup \{1\}} g(x) \Big\langle (b_{d-\lfloor \deg(g)/2\rfloor}(x) \otimes I_{m_j})^{\sf T}Y_g^j (b_{d-\lfloor \deg(g)/2\rfloor}(x) \otimes I_{m_j}),E_{r,s}^{m_j} \Big\rangle \\
&\quad= \sum_{g \in G_j \cup \{1\}} g(x) \Big\langle Y_g^j, b_{d-\lfloor \deg(g)/2\rfloor}(x)b_{d-\lfloor \deg(g)/2\rfloor}(x)^{\sf T} \otimes E_{r,s}^{m_j} \Big\rangle
\end{align*}
where $E_{r,s}^{m_j} = e_r e_s^{\sf T}$ is the standard basis of $\R^{m_j \times m_j}$.

This leads to the optimization problem
\begin{maxi*}
  {}{\langle b, y \rangle }{}{}
  \addConstraint{P_0^j(x)_{r,s} + \sum_{i=1}^N y_i P_i^j(x)_{r,s}}{= \big\langle M^{j}(x)_{r,s}, Y^j\big\rangle, \quad}{ j \in [J], \, r,s \in [m_j]}  \addConstraint{Y^j}{\succeq 0,}{j \in [J]},
\end{maxi*}
where
\[
M^j(x)_{r,s} = \bigoplus_{g\in G_j \cup \{1\}} g(x) b_{d-\lfloor \deg(g)/2\rfloor}(x)b_{d-\lfloor \deg(g)/2\rfloor}(x)^{\sf T} \otimes E_{r,s}^{m_j}.
\]

In applications, the formulation \eqref{polmatrprog} can be extended to a more general problem, where the polynomial matrices depend linearly on positive semidefinite matrix variables in addition to the free variables and where there are linear equality constraints on the free variables and the additional positive semidefinite matrix variables. We, therefore, use the following general form for a sums-of-squares problem:
\begin{maxi}
  {}{\sum_{j=1}^J \langle C^j, Y^j\rangle  + \langle b, y \rangle }{}{}
  \label{eq:pol_program}
  \addConstraint{\big\langle A_*^{j}(x), Y^j\big\rangle + B^j(x) y}{= c^j(x),\quad}{ j=1,\ldots,J}
  \addConstraint{Y^j}{\succeq 0,}{j=1,\ldots,J}.
\end{maxi}
Here we use the notation
\[
\langle A_*^{j}(x), Y^{j}\rangle = \big( \langle A_q^{j}(x), Y^{j}\rangle \big)_{q=1,\ldots,Q_j},
\]
where
\[
A_q^j(x) = \bigoplus_{l=1}^{L_j} \sum_{r,s=1}^{R_j(l)} A_q^{j,l}(r,s)(x) \otimes E_{r,s}^{R_j(l)},
\]
and where the matrices $A_q^{j,l}(r,s)$ are of low rank. Here $Q_j$ is the number of polynomial constraints in the $j$-th cluster, where different clusters are only linked via the free variables but not via the positive semidefinite matrix variables. Moreover, $L_j$ specifies the number of blocks on the diagonal of $A_q^j(x)$, where the $l$-th block is a $R_j(l) \times R_j(l)$ block matrix.

In \eqref{eq:pol_program} we optimize over the free variables $y$ and the positive semidefinite block-diagonal matrices $Y^j$. Here, the blocks  $Y^j_l$ do not all have to correspond to sum-of-squares polynomial matrices, and not all constraints have to be polynomial constraints (that is, some constraint can use degree $0$ polynomials). See the examples in Section~\ref{sec:applications}. 

The usual approach for converting a problem of the form \eqref{eq:pol_program} to a semidefinite program is to equate the coefficients of the polynomials in a common basis. This potentially gives sparsity but destroys the low-rank structure of the matrices. Instead, we use the sampling approach as introduced by L\"ofberg and Parrilo in \cite{lofberg_coefficients_2004} and used by Simmons-Duffin in \cite{simmons-duffin_semidefinite_2015}. For each $1 \leq j \leq J$ and $1 \leq q \leq Q_j$ we define a unisolvent set of points $M_q^j$ for the polynomial subspace spanned by the entries of $A_q^j(x)$, the entries in the $q$-th row of $B^j(x)$, and the $q$-th entry of $c^j(x)$. This is a set of points such that any polynomial in this space which is zero on $M_q^j$ is identically zero. Then we consider the linear constraints 
\[
\big\langle A_q^{j}(x'), Y^j\big\rangle + (B^j(x') y)_q = c_q^j(x')
\]
for $x' \in M_q^j$.  That is, when going from \eqref{eq:pol_program} to \eqref{pr:sdp}, we set
\[
 T_j = \{ (q, x') : q=1, \ldots, Q_j, \, x' \in M^j_q\}.
\]



\section{Combining symmetry reduction and sampling}
\label{sec:symmetry}

In this section, we investigate the combination of symmetry reduction with sampling as opposed to coefficient matching. For this, we first give an exposition of the symmetry reduction approach for polynomial optimization by Gatermann and Parrilo \cite{gatermann_symmetry_2004}, where we give more details for the constructions important in this paper. For notational simplicity we consider only polynomial programs as opposed to matrix polynomial programs.

Suppose the polynomials $P_0, \ldots, P_N$ and the set $\mathcal S(G)$ are invariant under the action of a finite group $\Gamma$, and assume the polynomials in $G$ are chosen to be $\Gamma$-invariant (which is in fact always possible if $\mathcal S(G)$ is invariant under $\Gamma$; see Appendix~\ref{appendix:sym}). Then the sums-of-squares characterization for a constraint of the form
\[
P_0 + \sum_{i=1}^N y_i P_i \ge 0 \text{ on } \mathcal S(G)
\]
can be written more efficiently. 

Let $\Gamma$ be a finite group with a linear action on $\C^n$, and define the representation $L \colon \Gamma \to \mathrm{GL}(\C[x])$ by
$
L(\gamma)p(x) = p(\gamma^{-1} x).
$
Here $\mathrm{GL}(\C[x])$ is the automorphism group of the vector space $\C[x]$. Let $\widehat \Gamma$ be a complete set of irredicible representations $(\pi, V_\pi)$ of $\Gamma$. By Maschke's theorem, we get the decomposition
\[
\C[x] = \bigoplus_{\pi \in \widehat \Gamma} \bigoplus_{i} H_{\pi,i},
\] 
where $H_{\pi, i}$ is equivalent to $V_\pi$. Since the space of homogeneous polynomials of degree $k$ is invariant under the action of $\Gamma$, we may assume that  $H_{\pi,i}$ is spanned by homogeneous polynomials of the same degree.

Since $\Gamma$ is finite we can choose a basis $e_{\pi,1},\ldots,e_{\pi,d_\pi}$ of $V_\pi$ in which the linear operators $\pi(\gamma)$ are unitary matrices. We want to define bases $e_{\pi,i,1}$, $\ldots$, $e_{\pi,i,d_\pi}$ of $H_{\pi,i}$ which are symmetry adapted in the sense that the restriction of $L(\gamma)$ to the invariant subspace $H_{\pi,i}$  in this basis  is exactly $\pi(\gamma)$. 

Such a basis exists since  $H_{\pi,i}$ is equivalent to $V_\pi$, so there are $\Gamma$-equivariant isomorphisms $T_{\pi,i} \colon V_\pi \to H_{\pi,i}$ and we can define  $e_{\pi,i,j} = T_{\pi,i} e_{\pi,j}$. Then it follows that $L(\gamma) e_{\pi,i,j} = \sum_k \pi(\gamma)_{k,j} e_{\pi,i,k}$. As described in \cite[Section 2.7]{serre_linear_1996} a symmetry-adapted basis can be constructed by defining the operators
\begin{equation}\label{eq:symmetrize}
p_{j,j'}^\pi = \frac{d_\pi}{|\Gamma|} \sum_{\gamma \in \Gamma} \pi(\gamma^{-1})_{j',j}L(\gamma),
\end{equation}
and then choosing bases $\{e_{\pi,i,1}\}_i$ of $\mathrm{Im}(p_{1,1}^\pi)$ and setting $e_{\pi,i,j} = p_{j,1}^\pi e_{\pi,i,1}$. Then,
\begin{align*}
    L(\tilde\gamma)e_{\pi,i,j} &= \frac{d_\pi}{|\Gamma|} \sum_{\gamma\in\Gamma} \pi(\gamma^{-1})_{1,j} L(\tilde\gamma\gamma)e_{\pi,i,1} = \frac{d_\pi}{|\Gamma|} \sum_{\gamma\in\Gamma} \pi(\gamma^{-1} \tilde\gamma)_{1,j} L(\gamma)e_{\pi,i,1}\\
    &=\frac{d_\pi}{|\Gamma|} \sum_{\gamma\in\Gamma}\sum_{k=1}^{d_\pi} \pi(\gamma^{-1})_{1,k} \pi(\tilde\gamma)_{k,j} L(\gamma) e_{\pi,i,1} = \sum_{k=1}^{d_\pi} \pi(\tilde\gamma)_{k,j} e_{\pi,i,k}.
\end{align*}

If the irreducible representations occurring in the decomposition of $\C[x]$ are of real type, then we can choose bases so that the matrices $\pi(\gamma)$ are orthogonal. If moreover $\R[x]$ is $\Gamma$-invariant, then the above explicit construction shows we can take the symmetry adapted basis to be real: Since \eqref{eq:symmetrize} is a real operator, we can choose a real basis $\{e_{\pi,i,1}\}_i$ of the image of $p_{1,1}^\pi$ and $e_{\pi,i,j} = p_{j,1}^\pi e_{\pi,i,1}$ will be real as well.

For each $\pi$ we define the matrix polynomial $E_\pi$ by
\[
E_\pi(x)_{i,i'} = \frac{1}{d_\pi} \sum_{j=1}^{d_\pi} e_{\pi,i,j}(x) \overline{e_{\pi,i',j}(x)}
\]
That the matrices $E_\pi(x)$ are $\Gamma$-invariant follows from the alternative definition
\[
E_\pi(x)_{i,i'} = \frac{1}{|\Gamma|} \sum_{\gamma \in \Gamma} e_{\pi,i,j}(\gamma^{-1} x) \overline{e_{\pi,i',j}(\gamma^{-1} x)}
\]
(for any $1 \leq j \leq d_\pi$), which follows from
\begin{align*}
&\frac{1}{|\Gamma|} \sum_{\gamma \in \Gamma} e_{\pi,i,j}(\gamma^{-1} x) \overline{e_{\pi,i',j}(\gamma^{-1} x)} \\
&\quad= \frac{1}{|\Gamma|} \sum_{\gamma \in \Gamma} \sum_{l=1}^{d_\pi} \pi(\gamma^{-1})_{l,j} e_{\pi,i,l}(x) \sum_{k=1}^{d_\pi} \overline{\pi(\gamma^{-1})_{k,j} e_{\pi,i',k}(x)} \\
&\quad= \sum_{l,k=1}^{d_\pi} e_{\pi,i,l}(x) \overline{e_{\pi,i',k}(x)} \frac{1}{|\Gamma|}\sum_{\gamma \in \Gamma} \pi(\gamma^{-1})_{l,j} \overline{\pi(\gamma^{-1})_{k,j}}  \\
&\quad= \sum_{l,k=1}^{d_\pi} e_{\pi,i,l}(x) \overline{e_{\pi,i',k}(x)} \,\frac{\delta_{kl}}{d_\pi},
\end{align*}
where we use the Schur orthogonality relations (see, e.g., \cite[Section 2.2]{serre_linear_1996}) in the last equality.  

For $d\in \N$, we define $E_\pi^d(x)$ as the submatrix of $E_\pi(x)$ indexed by rows and columns for which $\deg e_{\pi,i,j}(x) \leq d$. The following proposition shows how these matrices can be used to parametrize Hermitian sum-of-squares polynomials  by Hermitian positive semidefinite matrices. If the symmetry-adapted basis is real, then the matrices $E_\pi^d$ are symmetric and we can parametrize real sum-of-squares polynomials by positive semidefinite matrices.

\begin{proposition}
\label{prop:G-inv_SOS}
If $p \in \C[x]_{\leq 2d}$ is a $\Gamma$-invariant Hermitian sum-of-squares polynomial, then there are Hermitian positive semidefinite matrices $C_\pi$ such that
\[
p(x) = \sum_\pi \Big\langle C_\pi, E^d_\pi(x) \Big\rangle.
\]
\end{proposition}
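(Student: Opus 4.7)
The plan follows a standard averaging-plus-Schur argument, adapted to the symmetry-adapted basis constructed in the preceding paragraphs.

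First, since $p$ is a Hermitian sum-of-squares polynomial of degree at most $2d$, there is a Hermitian positive semidefinite matrix $Y$ with $p(x) = b(x)^* Y b(x)$, where $b(x)$ is the column vector listing the symmetry-adapted basis elements $e_{\pi,i,j}(x)$ of degree at most $d$. The defining transformation law $L(\gamma) e_{\pi,i,j} = \sum_k \pi(\gamma)_{k,j} e_{\pi,i,k}$ exactly says that $b(\gamma^{-1}x) = \rho(\gamma) b(x)$, where $\rho$ is block diagonal in $\pi$ and, on the isotypic component $H_\pi$, acts as $I_{m_\pi}\otimes \pi(\gamma)$ (after ordering the basis with multiplicity index outer and irreducible index inner); in particular each $\rho(\gamma)$ is unitary.

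Second, I symmetrize the Gram matrix using the $\Gamma$-invariance of $p$. Averaging over the group gives
\[
p(x) = \frac{1}{|\Gamma|}\sum_{\gamma\in\Gamma} p(\gamma^{-1}x) = b(x)^*\tilde Y b(x), \qquad \tilde Y := \frac{1}{|\Gamma|}\sum_{\gamma\in\Gamma}\rho(\gamma)^* Y \rho(\gamma),
\]
and $\tilde Y$ is still Hermitian positive semidefinite (unitary conjugation preserves the PSD cone, which is convex) and by construction commutes with each $\rho(\gamma)$.

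Third, I apply Schur's lemma to read off the block structure of $\tilde Y$. There are no nonzero intertwiners between inequivalent irreducibles, so $\tilde Y$ is block diagonal across $\pi$; and the commutant of the representation $I_{m_\pi}\otimes \pi(\gamma)$ on $\C^{m_\pi}\otimes V_\pi$ consists precisely of operators of the form $\tilde C_\pi \otimes I_{d_\pi}$. Hence
\[
\tilde Y_{(\pi,i,j),(\pi',i',j')} = \delta_{\pi\pi'}\,\delta_{jj'}\,(\tilde C_\pi)_{i,i'}
\]
for some matrices $\tilde C_\pi$, which inherit Hermitian positive semidefiniteness from $\tilde Y$ through the tensor factorization. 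Plugging this back into $b(x)^*\tilde Y b(x)$ collapses the $j$-sum via the definition of $E_\pi^d(x)$, producing $p(x) = \sum_\pi d_\pi \langle \tilde C_\pi, E_\pi^d(x)\rangle$; absorbing the proportionality constant into $C_\pi := d_\pi \tilde C_\pi$ gives the stated identity. The main delicacy is the bookkeeping in the Schur step, namely pinning down that $\rho$ acts on the irreducible-factor side of the tensor so the commutant lives on the multiplicity-factor side, and tracking that Hermitian positive semidefiniteness of $\tilde Y$ transfers to $\tilde C_\pi$.
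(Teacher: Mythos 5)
Your proposal is correct and follows essentially the same route as the paper's proof: write $p$ with a Hermitian PSD Gram matrix in the symmetry-adapted basis, average the Gram matrix over $\rho(\gamma)$ using the $\Gamma$-invariance of $p$, apply Schur's lemma to obtain the block form $\bigoplus_\pi \tilde C_\pi \otimes I_{d_\pi}$, and collapse the $j$-sum against the definition of $E_\pi^d$. The only difference is cosmetic: you absorb the factor $d_\pi$ into $C_\pi$ at the end, whereas the paper writes the Schur blocks as $\frac{1}{d_\pi}C_\pi \otimes I_{d_\pi}$ from the start.
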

\begin{proof}
Define the column vector $b(x) = (e_{\pi,i,j}(x))_{\pi,i,j}$ with $\deg e_{\pi,i,j}(x) \leq d$. Since $p$ is a Hermitian sum-of-squares polynomial it can be written as 
\[
p(x) = \sum_i (c_i^* b(x))^* (c_i^* b(x)),
\]
so there exists a Hermitian positive semidefinite matrix $A$ such that
\[
p(x) = b(x)^* A b(x).
\]
Let $\rho(\gamma)$ be the matrix obtained by expressing the restriction of $L(\gamma)$ to $\C[x]_{\leq 2d}$ in the symmetry adapted basis, so that 
\begin{equation}\label{eq:block}
\rho(\gamma) = \bigoplus_\pi I_{m_\pi} \otimes \pi(\gamma)
\end{equation}
and $\rho(\gamma) b(x) = b(\gamma^{-1} x)$ for all $x$ and $\gamma$. Define 
\[
B = \frac{1}{|\Gamma|}\sum_{\gamma \in \Gamma} \rho(\gamma)^* A \rho(\gamma).
\]
Then $\rho(\gamma)^* B \rho(\gamma) = B$ for all $\gamma\in \Gamma$ and
\begin{align*}
p(x) &= \frac{1}{|\Gamma|}\sum_{\gamma \in \Gamma} b(\gamma^{-1} x)^* A b(\gamma^{-1}x) =  b(x)^* B  b(x).
\end{align*}
By writing $B$ in the block form $B = ( B_{(\pi,i),(\pi',i')} )$ and using \eqref{eq:block} we get
\[
\pi(\gamma) B_{(\pi,i),(\pi',i')} =  B_{(\pi,i),(\pi',i')} \pi'(\gamma)
\]
for all $\gamma\in \Gamma$. By Schur's lemma $B_{(\pi,i),(\pi',i')}$ is a multiple of the identity if $\pi = \pi'$ and zero otherwise (see, e.g., \cite[Section 2.2]{serre_linear_1996}). This shows $B = \bigoplus_\pi \frac{1}{d_\pi}C_\pi \otimes I_{d_\pi}$ for positive semidefinite matrices $C_\pi$. We then have
\begin{align*}
p(x) &= \langle B, b(x)b(x)^*\rangle\\
&= \left\langle \bigoplus_\pi \frac{1}{d_\pi}C_\pi \otimes I_{d_\pi}, b(x)b(x)^*\right\rangle \\
&= \sum_\pi \sum_{j=1}^{d_\pi} \sum_{i,i'=1}^{m_\pi} \frac{1}{d_\pi}(C_\pi)_{i,i'} e_{\pi,i,j}(x) e_{\pi,i',j}(x)^* \\
&= \sum_{\pi}  \big\langle C_\pi, E^d_\pi(x) \big\rangle,
\end{align*}
which completes the proof.
\qed\end{proof}
In applications, the symmetry groups often are reflection groups (see, e.g., \cite{MR3679945} and Section~\ref{sec:kissing_number}), and as described in \cite{gatermann_symmetry_2004} for these groups we can choose the symmetry adapted basis in such a way that the matrices $E_\pi(x)$ have a tensor structure. By, e.g., \cite[Section 3.6]{humphreys_reflection_1990}, $\C[x]$ is a free module over the invariant ring $\C[x]^\Gamma$ of rank $|\Gamma|$. Moreover, the span of any $\C[x]^\Gamma$ module basis of $\C[x]$ is equivalent to the regular representation of $\Gamma$. This means we have the decomposition
\[
\C[x] = \C[x]^\Gamma \bigoplus_\pi \bigoplus_{i=1}^{d_\pi} V_{\pi,i},
\]
where $V_{\pi,i} \subseteq \C[x]$ is equivalent to $V_\pi$. As before, we may assume that $V_{\pi,i}$ is spanned by homogeneous polynomials of the same degree. 

Let $\{f_{\pi,i,j}\}$ be a symmetry adapted basis of
\[
\bigoplus_\pi \bigoplus_{i=1}^{d_\pi} V_{\pi,i}.
\]
Then we can choose the symmetry-adapted basis of $\C[x]$ to be of the form
\[
e_{\pi,(i,k),j}(x) = f_{\pi, i, j}(x)\, w_k(x),
\]
where $w_k(x)$ is a basis of $\C[x]^\Gamma$.
The matrix $E_\pi(x)$ can therefore be written as 
\[
E_{\pi}(x)_{(i,k),(i',k')} = \sum_{j=1}^{d_\pi} f_{\pi,i,j}(x) w_k(x)\overline{f_{\pi,i',j}(x)w_{k'}(x)},
\]
i.e.,
\[
E_\pi(x) = \Pi_\pi(x) \otimes w(x)w(x)^*,
\]
where $\Pi_\pi(x)$ is the matrix given by $\Pi_\pi(x)_{i,i'} = \sum_{j=1}^{d_\pi} f_{\pi,i,j}(x) \overline{f_{\pi,i',j}(x)}$. 

\medskip

From now on we assume that we can and do choose the symmetry-adapted basis  to be real so that the matrices $E_\pi(x)$ are symmetric. We then replace the constraint
\[
P_0 + \sum_{i=1}^N y_i P_i \geq 0 \text{ on } \mathcal S(G)
\]
by the condition that there are positive semidefinite matrices $Y_{g,\pi}$ for which
\begin{equation}\label{eq:harmonic}
P_0(x) + \sum_{i=1}^N y_i P_i(x) = \sum_{g \in G \cup \{1\}} g(x) \sum_\pi \left\langle Y_{g,\pi}, E_\pi^{d-\lfloor\deg(g)/2\rfloor}(x) \right\rangle.
\end{equation}

As in Section~\ref{sec:MPMP} we want to model constraint \eqref{eq:harmonic} by the linear constraints obtained from evaluating it at a unisolvent set of points. Since $P_i$, $g$, and $E_\pi$ are all $\Gamma$-invariant, it is sufficient to consider a unisolvent set for the subspace of $\Gamma$-invariant polynomials of degree at most $2d$.
A unisolvent set is said to be minimal if any strict subset is not unisolvent. In the following proposition we show that the constraints arising from evaluating \eqref{eq:harmonic} at a minimal unisolvent set are linearly independent, which is essential for the solver.

\begin{proposition}\label{prop:minimalunisolvent}
Evaluating \eqref{eq:harmonic} at a minimal unisolvent set $M$ for the subspace of $\Gamma$-invariant polynomials of degree at most $2d$  yields $|M|$ linearly independent constraints in the variables of the optimization problem.
\end{proposition}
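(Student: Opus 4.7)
The plan is to show that any affine-linear relation $\sum_{k=1}^M \alpha_k \Lambda_k \equiv 0$ among the $M$ evaluated constraint forms
\[
\Lambda_k(y,Y) = P_0^j(x_k) + \sum_{i=1}^N y_i P_i^j(x_k) - \sum_{g,\pi} g(x_k) \Big\langle Y_{g,\pi}, E_\pi^{d-\lfloor\deg(g)/2\rfloor}(x_k) \Big\rangle
\]
forces $\alpha_k = 0$ for all $k$. The strategy is to reinterpret the relation as saying that the point-evaluation functional $\ell(f) := \sum_k \alpha_k f(x_k)$ vanishes on a sufficiently rich space of $\Gamma_j$-invariant polynomials, and then apply minimal unisolvence.

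The first (bookkeeping) step is to read off the coefficient of each matrix entry $(Y_{g,\pi})_{i,i'}$ in $\sum_k \alpha_k \Lambda_k$. Setting this coefficient to zero gives
\[
\sum_{k=1}^M \alpha_k\, g(x_k)\, E_\pi^{d-\lfloor\deg(g)/2\rfloor}(x_k)_{i,i'} = 0
\]
for every admissible $g, \pi, i, i'$. Specializing to $g = 1$ yields $\ell\bigl(E_\pi^d(x)_{i,i'}\bigr) = 0$ for all $\pi, i, i'$.

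The main step is the spanning claim: the polynomials $\{E_\pi^d(x)_{i,i'}\}_{\pi,i,i'}$ span the space $V$ of $\Gamma_j$-invariant polynomials of degree at most $2d$. I would prove this by adapting the proof of Proposition \ref{prop:G-inv_SOS}, with positive semidefiniteness relaxed to symmetry. Given $p \in V$, write $p(x) = b(x)^{\sf T} A b(x)$ for some symmetric matrix $A$ (not necessarily positive semidefinite), where $b(x)$ is the vector of symmetry-adapted basis elements of degree at most $d$; this is possible because the entries $b_i(x) b_{i'}(x)$ span all polynomials of degree at most $2d$. Averaging $A$ over the $\Gamma_j$-action and invoking Schur's lemma exactly as in the proof of Proposition \ref{prop:G-inv_SOS} produces a matrix $B = \bigoplus_\pi \tfrac{1}{d_\pi} C_\pi \otimes I_{d_\pi}$ with $p(x) = \sum_\pi \langle C_\pi, E_\pi^d(x)\rangle$. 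Hence $\ell$ vanishes on all of $V$.

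Finally, minimality of the unisolvent set means $M = \dim V$ and the restrictions $\delta_{x_1}|_V,\ldots,\delta_{x_M}|_V$ form a basis of $V^*$, hence are linearly independent. The identity $\sum_k \alpha_k \delta_{x_k}|_V = 0$ therefore forces $\alpha_k = 0$ for all $k$. The only non-routine ingredient is the spanning claim for the $E_\pi^d(x)_{i,i'}$, and since it is essentially a relaxation of the argument already carried out for Proposition \ref{prop:G-inv_SOS}, I do not expect a serious obstacle.
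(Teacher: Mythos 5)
Your proof is correct and follows the same route as the paper: both reduce the claim to the fact that the entries of $\bigoplus_\pi E_\pi^d$ span $\R[x]^{\Gamma_j}_{\le 2d}$, and then combine this with minimal unisolvence of the sample set. The only variation is in how the spanning claim is justified --- the paper writes an arbitrary invariant polynomial of degree at most $2d$ as a difference of two invariant sums of squares (using full-dimensionality of the sum-of-squares cone) and then applies Proposition~\ref{prop:G-inv_SOS}, whereas you relax the proof of that proposition from positive semidefinite to arbitrary symmetric matrices, which is a little more direct and self-contained.
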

\begin{proof}
By the proof of Proposition~\ref{prop:G-inv_SOS}, we can express any polynomial as a linear combination of the entries of the matrix $\oplus_\pi E_\pi^d$, which means that these entries span the space of $\Gamma$-invariant polynomials of degree at most $2d$.

In particular, there is a subset $\{b_j(x)\}_j$ of the entries of $\oplus_\pi E_\pi^d$ which forms a basis for the invariant polynomials. Since the unisolvent set $M$ is minimal, the vectors $(b_j(x'))_j$ are linearly independent for $x' \in M$. This implies the matrices $\oplus_\pi E_\pi^d(x')$ are linearly independent, and hence the linear combinations
\[
\sum_\pi \Big\langle Y_{1,\pi}, E_\pi^d(x')\Big\rangle,
\]
for $x' \in M$, are linearly independent. This shows that evaluating \eqref{eq:harmonic} on $M$ yields $|M|$ linearly independent constraints.
\qed\end{proof}

A symmetric polynomial optimization problem often arises as the symmetrization of a non-symmetric problem. For this reason, we want to know when and how a minimal unisolvent set for the non-symmetric problem gives a minimal unisolvent set for the symmetrized problem. The following lemma gives a sufficient condition for this to be the case.

\begin{lemma}
\label{lem:R_minimal_unisolvent}
Suppose $M$ is a $\Gamma$-invariant, minimal unisolvent set for $\R[x]_{\leq 2d}$. Then any set of representatives $R$ for the orbits in $M$ is minimal unisolvent for $\R[x]^\Gamma_{\leq 2d}$.
\end{lemma}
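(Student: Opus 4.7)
The plan is to reduce the problem to a dimension count. I first note a general fact implicit in the definition: a unisolvent set $S$ for a finite-dimensional polynomial space $V$ is minimal if and only if $|S|=\dim V$. Indeed, injectivity of the evaluation map $V\to\R^S$ gives $\dim V\leq |S|$; and if $\dim V<|S|$, then a nonzero linear functional $\sum_s c_s e_s^*$ vanishing on the image of $V$ expresses $p(s_0)$ as a combination of $\{p(s):s\neq s_0\}$ for any $s_0$ with $c_{s_0}\neq 0$, showing $S\setminus\{s_0\}$ is still unisolvent. So it suffices to verify that (i) $R$ is unisolvent for $V^\Gamma:=\R[x]^\Gamma_{\leq 2d}$ and (ii) $|R|=\dim V^\Gamma$.

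For (i), suppose $p\in V^\Gamma$ vanishes on $R$. Since $p$ is $\Gamma$-invariant and $R$ meets every $\Gamma$-orbit of $S$, $p$ vanishes on all of $S$. Unisolvency of $S$ for $V:=\R[x]_{\leq 2d}\supseteq V^\Gamma$ then forces $p=0$. This automatically yields $|R|\geq\dim V^\Gamma$.

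For the reverse inequality in (ii), I would exhibit $|R|$ linearly independent elements of $V^\Gamma$ by averaging Lagrange polynomials. By minimality of $S$ we have $|S|=\dim V$, so there exist $p_s\in V$ with $p_s(s')=\delta_{s,s'}$ for all $s,s'\in S$. Apply the Reynolds operator to form
\[
q_s(x)=\frac{1}{|\Gamma|}\sum_{\gamma\in\Gamma}p_s(\gamma^{-1}x)\in V^\Gamma.
\]
Since $S$ is $\Gamma$-invariant, for any $s'\in S$ we compute
\[
q_s(s')=\frac{1}{|\Gamma|}\#\{\gamma\in\Gamma:\gamma s=s'\},
\]
which equals $1/|\Gamma\cdot s|$ when $s'$ lies in the orbit of $s$ and $0$ otherwise. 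Hence for $r\in R$ the vectors $(q_r(s'))_{s'\in S}$ have disjoint supports (the distinct orbits), making $\{q_r:r\in R\}$ linearly independent in $V^\Gamma$. This gives $\dim V^\Gamma\geq|R|$, and combined with (i) yields $|R|=\dim V^\Gamma$, so $R$ is minimal unisolvent for $V^\Gamma$.

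The only potentially tricky step is the construction of orbit-supported invariants; the Reynolds-averaged Lagrange polynomials handle this cleanly and are the natural object here. Everything else is essentially bookkeeping with the equivalence "minimal unisolvent $\iff$ cardinality equals dimension."
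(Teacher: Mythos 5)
Your proof is correct. The first half (unisolvence of $R$, via invariance of $p$ and the fact that $R$ meets every orbit of $S$) is identical to the paper's. For minimality you take a genuinely different route: the paper forms the square Vandermonde matrix of $S$ with respect to a basis adapted to the Reynolds operator, performs column operations replacing the first $|R|$ columns by orbit averages $|\Gamma r|^{-1}\sum_{x\in\Gamma r}b(x)$, and reads off $|R|=\dim\R[x]^\Gamma_{\leq 2d}$ from the resulting block-triangular nonsingular matrix. You instead reduce everything to the equivalence ``minimal unisolvent $\iff$ cardinality equals dimension'' (which you correctly justify) and then exhibit $|R|$ linearly independent invariants directly, by applying the Reynolds operator to the Lagrange polynomials of $S$; the invariance of $S$ makes these orbit indicator functions (up to the factor $1/|\Gamma\cdot s|$), whence independence. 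The two arguments are dual to one another -- the paper averages evaluation vectors over orbits, you average the Lagrange basis over the group -- but yours avoids the matrix bookkeeping and makes the dimension count $\dim\R[x]^\Gamma_{\leq 2d}\geq|R|$ completely explicit, which is arguably cleaner; the paper's version has the minor advantage of producing the block factorization of the Vandermonde matrix, an object it uses elsewhere. No gaps.
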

\begin{proof}
Suppose $p \in \R[x]^\Gamma_{\leq 2d}$ satisfies $p(r) = 0$ for every $r\in R$. Then for every $x \in M$ there are $r \in R$ and $\gamma\in\Gamma$ with $x = \gamma r$, so $p(x) = p(\gamma r) = p(r) = 0$. Hence $p = 0$ by unisolvence of $M$. So $R$ is unisolvent for $\R[x]^\Gamma_{\leq 2d}$. 

Consider the projection operator $\mathcal P \colon \R[x]_{\le 2d} \to \R[x]_{\le 2d}$  defined by
\[
\mathcal P p(x) = \frac{1}{|\Gamma|} \sum_{\gamma\in\Gamma} p(\gamma^{-1} x).
\]
Let $b(x)$ be a column vector where the first entries form a basis for the eigenspace with eigenvalue $1$ of $\mathcal P$ and the remaining entries form a basis for the kernel of $\mathcal P$. Consider the Vandermonde matrix $V = (b(x))_{x \in M}$, where $M$ indexes the columns. By minimal unisolvence of $M$, the matrix $V$ is square and nonsingular. We can perform invertible column operations to transform the first $|R|$ columns into $|\Gamma r|^{-1} \sum_{x \in \Gamma r} b(x)$, where $\Gamma r$ is the orbit represented by $r \in R$. Now note that the first $\dim \R[x]^\Gamma_{\leq 2d}$ entries in each column stay the same and the other entries in the first $|R|$ columns become $0$. That is, after performing these column operations the matrix is of the form
\[
 V' = \begin{bmatrix}
 V_{11} & V_{12} \\
 0 & V_{22}
 \end{bmatrix}.
\]
Note that $V_{11}$ must be square: $R$ is unisolvent, so the number of columns $|R|$ is at least the number of rows $\dim \R[x]^\Gamma_{\leq 2d}$, and $V'$ is nonsingular. Hence $|R| = \dim \R[x]^\Gamma_{\leq 2d}$, i.e., $R$ is minimal unisolvent.
\qed\end{proof}

There are group actions for which a $\Gamma$-invariant, minimal unisolvent set $M$ does not exist. Since $\Gamma$ is a finite group, there are a finite number of orbit sizes $o_i$. Suppose $M$ and $R$ are as in Lemma~\ref{lem:R_minimal_unisolvent}. Then $M$ can be decomposed into orbits such that there are $k_i$ orbits with orbit size $o_i$. This directly means that $\sum_i k_i o_i = |M| = \dim \R[x]_{\leq 2d}$ because $M$ is minimal unisolvent. Furthermore, the minimial unisolvence of $R$ implies that $\sum_i k_i = |R| = \dim \R[x]_{\leq 2d}^\Gamma$. When this system of equations does not have a nonnegative integer solution, there does not exist a $\Gamma$-invariant, minimal unisolvent set. Moreover, even if the system does have a nonnegative integer solution, it is possible that there does not exist an invariant, minimal unisolvent set $M$. Such an example can be found in Appendix~\ref{appendix:min_unisolvent}.

In the following lemma we show that for a group action permuting $n+1$ affinely independent vectors (which includes the important case of permuting coordinates), an invariant, minimal unisolvent set exists. For this we use a geometric criterion of minimal unisolvence by Chung and Yao \cite{chung_lattices_1977}: a set $M$ of size $\dim \R[x]_{\leq d}$ is minimal unisolvent for $\R[x]_{\le d}$ if   
for every $x \in M$ there are hyperplanes $H_{x,1},\ldots,H_{x,d}$ such that
\[
M \cap \left(\bigcup_{l=1}^d H_{x,l}\right) = M \setminus \{x\}.
\]

\begin{lemma}
Let $S_{n+1}$ be the symmetric group on $n+1$ elements. Suppose $\Gamma \subseteq S_{n+1}$ acts on $\R^n$ by permuting $n+1$ affinely independent vectors $v_1,\ldots,v_{n+1}$. Then there is a $\Gamma$-invariant, minimal unisolvent set $M$ for $\R[x]_{\leq d}$.
\end{lemma}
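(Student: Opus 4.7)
The plan is to construct $S$ as the principal lattice of order $d$ associated with the simplex spanned by the affinely independent vectors $v_1,\ldots,v_{n+1}$, and then verify the Chung--Yao criterion. Writing points in $\R^n$ in barycentric coordinates $(\lambda_1,\ldots,\lambda_{n+1})$ with respect to $v_1,\ldots,v_{n+1}$ (so $\sum_j \lambda_j = 1$), I would set
\[
S = \Big\{ \tfrac{1}{d} \sum_{j=1}^{n+1} i_j v_j : i_j \in \Z_{\geq 0},\, \sum_{j=1}^{n+1} i_j = d\Big\}.
\]
The first easy observation is that $|S| = \binom{n+d}{n} = \dim \R[x]_{\leq d}$, which is the right size for a minimal unisolvent set. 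The second easy observation is $\Gamma$-invariance: a permutation $\sigma \in \Gamma \subseteq S_{n+1}$ of the vertices sends the lattice point with barycentric tuple $(i_1,\ldots,i_{n+1})$ to the point with tuple $(i_{\sigma^{-1}(1)},\ldots,i_{\sigma^{-1}(n+1)})$, which is again in $S$.

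The main verification is minimal unisolvence. For a fixed $p = \tfrac{1}{d}\sum_j i_j v_j \in S$ I would use the hyperplanes
\[
H_{j,k} = \Big\{ x \in \R^n : \lambda_j(x) = k/d \Big\}, \qquad 1 \leq j \leq n+1,\ 0 \leq k \leq i_j - 1,
\]
where $\lambda_j(x)$ denotes the $j$-th barycentric coordinate of $x$ (an affine function on $\R^n$). The total number of these hyperplanes is $\sum_j i_j = d$, matching the count required by Chung--Yao. The point $p$ itself does not lie on any $H_{j,k}$, since its $j$-th barycentric coordinate is $i_j/d$, which is strictly larger than $(i_j-1)/d$. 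Conversely, for any other lattice point $q = \tfrac{1}{d}\sum_j i'_j v_j \in S\setminus\{p\}$, the constraint $\sum_j i'_j = d = \sum_j i_j$ forces some index $j$ with $i'_j < i_j$, so $\lambda_j(q) = i'_j/d \in \{0/d,\ldots,(i_j-1)/d\}$, which puts $q$ on some $H_{j,k}$.

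Combining the three steps above with the Chung--Yao criterion stated immediately before the lemma yields that $S$ is $\Gamma$-invariant and minimal unisolvent for $\R[x]_{\leq d}$. I do not expect a real obstacle in this argument: the cardinality and invariance are immediate from the construction, and the hyperplane check is purely combinatorial once barycentric coordinates are in place. The only mild subtlety is verifying that the $\lambda_j$ genuinely are affine functions on $\R^n$ (so that the level sets are hyperplanes), which is guaranteed by the affine independence of $v_1,\ldots,v_{n+1}$.
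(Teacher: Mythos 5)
Your proposal is correct and takes essentially the same approach as the paper: both construct the principal lattice of order $d$ in barycentric coordinates with respect to $v_1,\ldots,v_{n+1}$ and verify the Chung--Yao criterion via the same family of level-set hyperplanes $\{\lambda_j = k/d\}$. Your write-up is in fact slightly more complete in that you explicitly check $|S| = \binom{n+d}{n} = \dim \R[x]_{\leq d}$, a count the paper leaves implicit.
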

\begin{proof}
Without loss of generality we can take $\Gamma = S_{n+1}$. For each $x\in\R^n$ there are unique coefficients $\alpha_1^x,\ldots,\alpha_{n+1}^x$ such that $\sum_{i} \alpha_i^x = 1$ and $x = \sum_i \alpha_i^x v_i$. Let $M$ be the set of points $x$ for which 
\[
\alpha_1^x,\ldots,\alpha_{n+1}^x \in \left\{ \frac{k}{d} : k=0,\ldots,d\right\}.
\]
Note that $M$ is invariant under the action of $\Gamma$. 

For each $x \in M$ we define the hyperplanes
\[
H_{x,(j,k)} = \big\{y \in \R^n : d\alpha_j^y = k\big\}
\]
for $1 \leq j \leq n+1$ and $0 \leq k < d\alpha_j^x$. Here $H_{x,(j,k)}$ is a hyperplane because $H_{x,(j,k)}-k/dv_j$ is the  affine hull of the vectors $(1-k/d) v_i$ with $i \neq j$. Note that this gives $\sum_{j} d \alpha_j^x = d$ hyperplanes for each $x$. 

Since $k < d \alpha_j^x$ we have $x \not\in H_{x,(j,k)}$. Moreover, for any $y \in M \setminus \{x\}$ there is an $i$ such that $\alpha^y_i < \alpha^x_i$, i.e., $y \in  H_{x, (i,d \alpha_i^y)}$. So the geometric characterization is satisfied, hence $M$ is minimal unisolvent.
\qed\end{proof}

\section{Computing good sample points and bases}
\label{sec:samples}

To model the constraints of the form \eqref{eq:harmonic} using sampling we need to find a minimal unisolvent set for the space of $\Gamma$-invariant polynomials of degree at most $2d$ (where $\Gamma$ is the trivial group if there is no symmetry). In Section~\ref{sec:symmetry} we explain when such a set can be derived from a minimal unisolvent set for $\R[x]_{\leq d}$. For the interior point method, however, we do not just want the set to be minimal unisolvent, but also to have good numerical conditioning. Moreover, as explained in Section~\ref{sec:symmetry}, if the group is a reflection group (which is often the case in practice), then the matrix $E_\pi^d(x)$ in \eqref{eq:harmonic} is a submatrix of 
\[
\Pi_\pi(x) \otimes w(x)w(x)^{\sf T},
\]
where for $w$ we can choose any vector whose entries form a basis for the $\Gamma$-invariant polynomials of degree at most $d$. Note that in the case of no symmetry, $\Pi_\pi(x)$ is just the $1 \times 1$ identity matrix.

In \cite{sommariva_computing_2009} Sommariva and Vianello discuss a greedy method for finding a good sample set and a good basis for quadrature problems. In this section we adapt this to find a good minimal unisolvent set of sample points as well as a good basis for the entries of $w(x)$.

The space of polynomials corresponding to a constraint in the polynomial matrix program \eqref{polmatrprog} is given by
\[
W = \R[x]_{\leq 2d}^{\Gamma_j}.
\]
Let $v$ be a vector whose entries form a  basis of $W$ such that $\deg(v_k)$ is nondecreasing in $k$. Let $M$ be a set containing $m$ distinct points $x_1,\ldots, x_m$ from the semialgebraic set $\mathcal S(G)$, where $m$ is at least $\dim(W)$. Here the idea is that we take $m$ much larger than $\dim(W)$ and later select a subset of good points. The Vandermonde matrix $V$ with respect to $v$ and $M$ is defined by $V_{lk} = v_k(x_l)$, for $l=1,\dots,m$ and $k=1,\dots,\dim(W)$. The set $M$ is unisolvent for $W$ if and only if the kernel of $V$ is trivial, and minimal unisolvent if additionally $V$ is square. 

The Fekete points for a compact domain and a polynomial space are defined as the points in the domain that maximize the determinant of the Vandermonde matrix in absolute value. Because a basis change multiplies the determinant by a constant not depending on the samples, the Fekete points do not depend on the basis. Instead of computing the Fekete points, which is hard to do in general \cite{taylor_algorithm_2000}, Sommariva and Vianello select a subset of a set of candidate points which approximately maximizes the determinant, the approximate Fekete points. This can be done greedily by a QR factorization of $V^{\sf T}$ with column pivoting; the points corresponding to the first $\dim(W)$ pivots approximately maximize the determinant. We now let $M'$ be the subset of $M$ corresponding to the first $\dim(W)$ many pivots, and let $V'$ be the square submatrix of $V$ by selecting the corresponding rows. If the original set $M$ was unisolvent, then the determinant of $V$ will be nonzero.

Because we use sample points to express the sums-of-squares constraints as semidefinite constraints, 
it is desirable for the numerical conditioning that the entries of $w$ are orthogonal with respect to the chosen sample set $M'$. Such a basis can be obtained by a QR factorization of the Vandermonde matrix $V'$, as also mentioned by L\"ofberg and Parrilo \cite{lofberg_coefficients_2004}. Here the columns of $Q$ represent a new basis of $W$, where each basis element defines a polynomial by its evaluations on the sample set $M'$. For the polynomials $w_i$ we now choose the polynomials in this basis which have degree at most $d$. 

Note that for the implementation it is not necessary to recover the coefficients of the polynomials $w_i$ in the monomial basis since we only ever use the evaluations of \eqref{eq:harmonic} on the points in $M'$. This means we can directly use the columns of $Q$ (which are in fact the coefficients with respect to the Lagrange basis for the set $M'$). In fact, we extend the computer algebra system \texttt{AbstractAlgebra.jl} \cite{Nemo.jl-2017} we use in the interface to our solver with a new type called \texttt{SampledMPolyElem}, which is a polynomial defined only by its evaluations on a given set. This is very useful for modeling for instance the right-hand side of \eqref{eq:harmonic}, where we have a mixture of polynomials (the weights $g(x)$ and the entries of the matrices $\Pi_\pi(x)$) and sampled polynomials (the entries of $w(x)$).

\medskip
\centerline { $ * \quad * \quad * $ }
\medskip

Since the matrices to which the linear algebra routines described above need to be applied are much larger than the matrices considered in the solver, we want to perform the operations in machine precision as much as possible (otherwise the preprocessing may become more expensive than solving the semidefinite program). Here we have to be careful that after performing the QR factorization of $V'$, the degrees of the polynomials corresponding to the columns in $Q$ are still nondecreasing in the column index, because we need the first columns to correspond to a basis of the polynomials in $W$ of degree at most $d$. We, therefore, have to adapt \cite[Algorithm 2]{sommariva_computing_2009} to our setting; this adaptation is also implemented in the package \texttt{ClusteredLowRankSolver.jl}.

First, we improve the basis by computing the QR factorization of $V$ in machine precision. We then compute the matrix $VR^{-1}$ using high-precision arithmetic and replace $V$ with it. By using high-precision arithmetic  we ensure that the degrees of the polynomials corresponding to the columns of $V$ will still be nonincreasing in the column index. Although the columns of the new $V$ will not be orthogonal (due to numerical issues in the QR factorization), they will be more orthogonal than before. We can optionally repeat this process a few times. 

We then compute a pivoted QR factorization of $V^{\sf T}$ in machine precision, and as described above use it to select a suitable set of samples and define $V'$ by selecting the corresponding rows of $V$.

To find a basis that is orthogonal with respect to this sample set we compute the QR factorization of $V'$ in machine precision and compute $V'R^{-1}$ in high-precision arithmetic. Since the numerical conditioning of $V'$ should now be relatively good, the columns of $V'R^{-1}$  will indeed be near orthogonal. We then select appropriate columns to use as basis polynomials for the entries of $w$, where as described above we do not need to compute the coefficients in the monomial basis.

\section{Applications}
\label{sec:applications}

In this section we consider two applications from discrete geometry: the kissing number problem and the binary sphere packing problem. The first application showcases the speed of the solution approach for a symmetric multivariate polynomial optimization problem. The second application showcases the stability of the sampling approach for a univariate polynomial matrix problem.

\subsection{Kissing number problem}\label{sec:kissing_number} 

A subset $C$ of the unit sphere $S^{n-1} = \{v \in \R^n : \langle v, v\rangle = 1\}$ is a spherical $\theta$-code if $\langle v, v' \rangle \leq \cos(\theta)$ for all distinct $v,v' \in C$. In discrete geometry we are interested in the maximum size $A(n, \theta)$ of such a set. For $\cos\theta=1/2$ this is the kissing number problem, where we ask for the maximum number of unit spheres that can simultaneously touch a central unit sphere. The kissing number problem has a rich history; see \cite{pfender_ziegler_kissing_2004} for background information.
 
In \cite{bachoc_new_2007}, Bachoc and Vallentin introduce a three-point semidefinite programming bound that gives many of the best-known upper bounds on $A(n,\theta)$. Here we give the formulation of this bound leaving out an ad-hoc $2\times 2$ matrix which does not contribute numerically \cite{dostert_exact_2021}. 

Define the matrices $Y_k^n(u,v,t)$ by 
\[
Y_k^n(u,v,t)_{ij} = u^i v^j (1-u^2)^{k/2} (1-v^2)^{k/2} P_k^{n-1}\mathopen{}\left(\frac{t-uv}{\sqrt{(1-u^2)(1-v^2)}}\right),
\]
where $P_k^n$ is the $k$-th degree Gegenbauer polynomial with parameter $n/2-1$, normalized such that $P_k^n(1) = 1$. Define the matrices $\bar Y_k^n(u,v,t)$ by
\[
\bar Y_k^n(u,v,t) = \frac{1}{6}\sum_{\sigma\in S_3}\sigma Y_k^n(u, v, t),
\]
where $\sigma \in S_3$ acts on $Y_k^n$ by permuting its arguments. With these matrices the three-point bound is the problem
\begin{mini*}
  {}{1+\sum_{k=0}^{2d} a_k + \big\langle\bar Y_0^n(1,1,1), F_0 \big\rangle}{}{}
  \addConstraint{\sum_{k=0}^{2d} a_k P_k^n(u) + 3\sum_{k=0}^d  \big\langle \bar Y_k^n(u,u,1), F_k \big\rangle}{\leq -1,\quad}{u \in [-1, \cos\theta]}
    \addConstraint{\sum_{k=0}^d  \big\langle \bar Y_k^n(u,v,t), F_k \big\rangle}{\leq 0,\quad}{ (u,v,t) \in \mathcal S(G)}
    \addConstraint{a_0,\ldots, a_{2d} \geq 0, F_0, \ldots, F_d \succeq 0,}{}{}
\end{mini*}
where 
\[
    \mathcal S(G) = \big\{(u,v,t) : -1 \leq u,v,t \leq \cos\theta, \,1+2uvt -u^2-v^2-t^2 \geq 0\big\}.
\]
Note that the multivariate constraints are symmetric under the action of the symmetric group $S_3$ on three elements, so that we use the techniques from Section~\ref{sec:symmetry} and \ref{sec:samples}.

We can view the above problem as being an extension of \eqref{polmatrprog} with the additional positive semidefinite matrix variables $a_k$ of size $1$ and $F_k$ of size $d-k+1$. Using the approach from Section~\ref{sec:MPMP} we then obtain a problem in the form \eqref{eq:pol_program} and after sampling a semidefinite program of the form \eqref{pr:sdp}. We consider two ways of doing this. 

In the first approach we view the problem as an extension of \eqref{polmatrprog} without free variables, but where the polynomial
constraints depend linearly on $a_k$ and $F_k$. After converting to \eqref{eq:pol_program}, the semidefinite program has positive semidefinite matrix variables $a_k$, $F_k$, and variables corresponding to the sum-of-square polynomials. When going from \eqref{eq:pol_program} to \eqref{pr:sdp} we have to evaluate the polynomials $P_k^n$, the polynomial matrices $\bar Y_k^n$, and the matrices arising from the sum-of-squares polynomials at the samples. Here it is beneficial to factor $\bar Y_k^n$ symbolically before evaluating it at the samples. The matrices $\bar Y_k^n(u,v,t)$ have rank at most $3$ after evaluation at a point $(u,v,t)$. It is however not clear whether we can write $\bar Y_k^n(u,v,t)$ as a sum of three symmetric rank $1$ matrix polynomials  of the form $\lambda(x)v(x)v(x)^{\sf T}$. We, therefore, decompose $\bar Y_k^n(u,v,t)$ as a sum of nonsymmetric rank $1$ matrix polynomials and we use the fact that our semidefinite programming solver supports nonsymmetric rank $1$ factors in the symmetric constraint matrices. 
For large $d$ this will be the fastest approach. 

For intermediate $d$ we consider the alternative approach where we use free variables for the entries of $a_k$ and $F_k$, write the polynomial constraints in terms of these free variables, and use additional rank $1$ linear constraints to link the free variables to newly introduced positive semidefinite matrices $a_k'$ and $F_k'$. This approach can be faster for small and intermediate $d$ and uses less memory, which can be important for practical computations. For the computations discussed below (where $d$ is at most $20$) we use this approach. For more complicated problems, with for instance polynomial inequality constraints on unions of basic semialgebraic sets, this approach may be very useful since it allows more fine-grained clustering of the positive semidefinite matrices which the solver can exploit.

Initial computations for the three-point bound were performed by Bachoc and Vallentin using \texttt{CSDP} \cite{borchers_CSDP_1999}, but since this is a machine precision solver it was not possible to go beyond $d=10$ \cite{bachoc_new_2007}. Mittelman and Vallentin \cite{mittelmann_high_2010} then used the high precision solvers \texttt{SDPA-QD} and \texttt{SDPA-GMP} to perform computations up to $d=14$. Later Machado and Oliviera \cite{machado_improving_2018} applied symmetry reduction and used \texttt{SDPA-GMP} to compute bounds up to degree $d=16$. 

For $d=16$ and using the same symmetry reduction, our solver gives a speedup by a factor of $28$ over the approach using \texttt{SDPA-GMP} using $8$ cores, and a speedup by a factor of $9.6$ using $1$ core. Here we use the same hardware and the default settings of \texttt{SDPA-GMP}, except that we use $256$ bits of floating point precision for all three-point  bound computations. 

It would be interesting to compare this to timings one would obtain using direct optimization over sum-of-squares polynomials, as developed by Skajaa, Ye, Papp, and Y{\i}ld{\i}z  \cite{skajaa_homogeneous_2015,papp_homogeneous_2018,papp_sum--squares_2018}. However, for this their approach would first have to be extended to semidefinite programs with polynomial constraints, and a high-precision solver would have to be implemented.

As can be seen in Figure~\ref{fig:time_threepointbound_linear} and Figure~\ref{fig:time_threepointbound_log} the factor by which our solver is faster than the approach with \texttt{SDPA-GMP} increases with $d$. For the approach using \texttt{SDPA-GMP} the computation time theoretically scales as $d^{12}$ when sparsity is not exploited; in practice we see that it scales as $d^{10.1}$. With our approach the computation time theoretically scales as $d^9$, and in practice we observe a scaling of $d^{8.55}$. The discrepancy between theory and practice for our approach can in part be explained by the fact that matrix multiplication in Arb is faster than cubic \cite{johansson_faster_2019}.

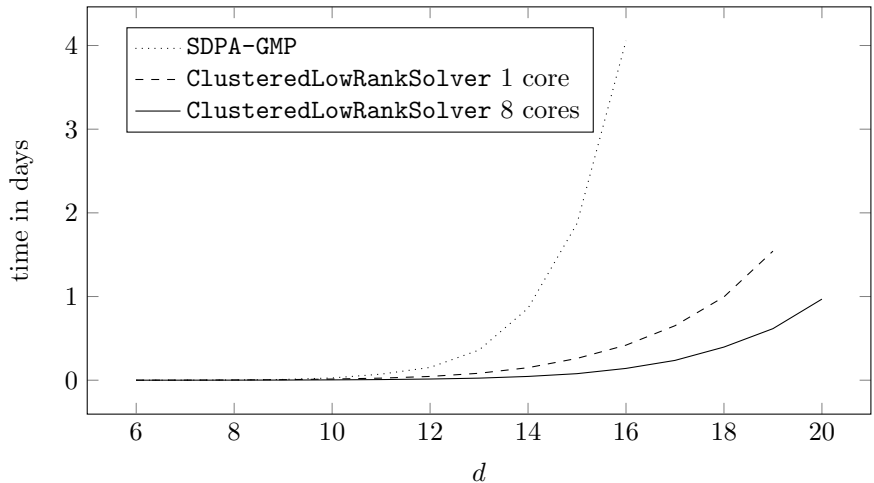
\begin{figure}
    \centering
    \begin{tikzpicture}
\begin{axis}[ylabel={time in days},ylabel style = {yshift={-1em}}, xlabel={$d$}, xmin={5.0}, xmax={21.0}, legend style={at={(0.05,0.95)
}, anchor={north west}},legend cell align={left}, width={120mm}, height={70mm}, xtick={{10,20}}, xticklabels={{$10$,$20$}}, extra x ticks={{6,8,12,14,16,18}}, extra x tick labels={6,8,12,14,16,18}]
    \addplot[dotted]
        table[row sep={\\}]
        {
            \\
            6  0.00023084404369212963  \\
            7  0.0008855452125231482  \\
            8  0.0029611971285416665  \\
            9  0.009010452356226852  \\
            10  0.026820384692349535  \\
            11  0.07239924046471065  \\
            12  0.15263824499689815  \\
            13  0.360471167773125  \\
            14  0.8602249955590046  \\
            15  1.8730173381529167  \\
            16  4.056067032465197  \\
        }
        ;
    \addlegendentry {\texttt{SDPA-GMP}}
    \addplot[dashed]
        table[row sep={\\}]
        {
            \\
            6       0.0004371859022864589   \\
            7       0.0010735797688916878   \\
            8       0.0029377950341613205   \\
            9       0.00646187830026503     \\
            10      0.013819564293932031    \\
            11      0.02478117893415469     \\
            12      0.04505661885495539     \\
            13      0.0829454838981231      \\
            14      0.149576953958582       \\
            15      0.2604028888074336      \\
            16      0.41879238627299115     \\
            17      0.6511371186127265      \\
            18      0.995839359536767       \\
            19      1.5424868345122646      \\
        }
        ;
    \addlegendentry {\texttt{ClusteredLowRankSolver} 1 core}
    \addplot[solid]
        table[row sep={\\}]
        {
            \\
            6  0.00017826331986321343  \\
            7  0.00038261787207038316  \\
            8  0.0008991980442294368  \\
            9  0.001838558934353016  \\
            10  0.003921085474667726  \\
            11  0.007574711447512662  \\
            12  0.014030572465724415  \\
            13  0.02476262925951569  \\
            14  0.04511357049699183  \\
            15  0.07818823021871073  \\
            16  0.14097209284702936  \\
            17  0.23672282923426893  \\
            18  0.39615767190853757  \\
            19  0.616100584803908  \\
            20  0.9690950763556693  \\
        }
        ;
    \addlegendentry {\texttt{ClusteredLowRankSolver} 8 cores}
    
\end{axis}
\end{tikzpicture}
    \vspace{-2em}
    \caption{The time needed to compute the three-point bound for the kissing number in dimension $n=4$ for several degrees $d$ on a linear scale, using \texttt{SDPA-GMP} and \texttt{ClusteredLowRankSolver}.}
    \label{fig:time_threepointbound_linear} 
\end{figure}
\begin{figure}
    \centering
    \begin{tikzpicture}
\begin{loglogaxis}[ylabel={time in seconds},ylabel style = {yshift={-0.5em}}, xlabel={$d$}, xmin={5.0}, xmax={21.0}, legend style={at={(0.05,0.95)
}, anchor={north west}},legend cell align={left}, width={120mm}, height={70mm}, xtick={{10,20}}, xticklabels={{$10$,$20$}}, extra x ticks={{6,8,12,14,16,18}}, extra x tick labels={6,8,12,14,16,18}]
    \addplot[dotted]
        table[row sep={\\}]
        {
            \\
            6  19.944925375  \\
            7  76.511106362  \\
            8  255.847431906  \\
            9  778.503083578  \\
            10  2317.281237419  \\
            11  6255.294376151  \\
            12  13187.944367732  \\
            13  31144.708895598  \\
            14  74323.439616298  \\
            15  161828.698016412  \\
            16  350444.191604993  \\
        }
        ;
    \addlegendentry {\texttt{SDPA-GMP}}
    \addplot[dashed]
        table[row sep={\\}]
        {
            \\
            6       37.77286195755005       \\
            7       92.75729203224182       \\
            8       253.8254909515381       \\
            9       558.3062851428986       \\
            10      1194.0103549957275      \\
            11      2141.093859910965       \\
            12      3892.8918690681458      \\
            13      7166.489808797836       \\
            14      12923.448822021484      \\
            15      22498.809592962265      \\
            16      36183.662173986435      \\
            17      56258.24704813957       \\
            18      86040.52066397667       \\
            19      133270.86250185966      \\
        }
        ;
    \addlegendentry {\texttt{ClusteredLowRankSolver} 1 core}
    \addplot[solid]
        table[row sep={\\}]
        {
            \\
            6  15.40195083618164  \\
            7  33.0581841468811  \\
            8  77.69071102142334  \\
            9  158.8514919281006  \\
            10  338.7817850112915  \\
            11  654.455069065094  \\
            12  1212.2414610385895  \\
            13  2139.4911680221558  \\
            14  3897.812490940094  \\
            15  6755.463090896606  \\
            16  12179.988821983337  \\
            17  20452.852445840836  \\
            18  34228.022852897644  \\
            19  53231.09052705765  \\
            20  83729.81459712982  \\
        }
        ;
    \addlegendentry {\texttt{ClusteredLowRankSolver} 8 cores}
\end{loglogaxis}
\end{tikzpicture}
    \vspace{-2em}
    \caption{The time needed to compute the three-point bound for the kissing number in dimension $n=4$ for several degrees $d$ on a log-log scale, using \texttt{SDPA-GMP} and \texttt{ClusteredLowRankSolver}.}
    \label{fig:time_threepointbound_log}
\end{figure}
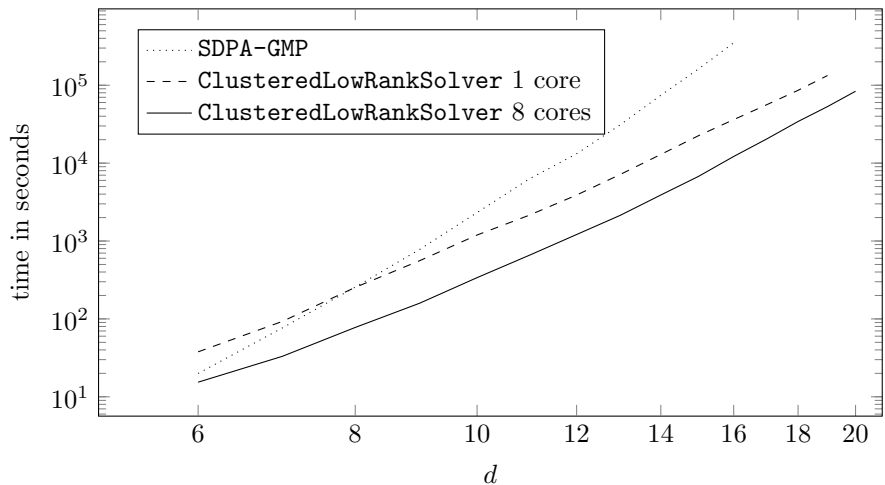

Because of the speedup of our approach we can perform computations up to $d=20$ within a reasonable time frame (extrapolating Figure~\ref{fig:time_threepointbound_log} shows that the approach using \texttt{SDPA-GMP} would have been slower by a factor $40$ for $d=20$). In Table~\ref{tab:kissing_number_bounds} we show the kissing number bounds for $d =16,\ldots,20$ for dimensions up to $24$. Dimension $2$, $8$, and $24$ are omitted since the linear programming bound is sharp in these dimensions. After rounding down to the nearest integer, this improves the best known upper bounds in dimensions $11$ through $23$. 

Rigorous verification of these bounds can be done using standard interval-arithmetic techniques (see, e.g., \cite{de_laat_upper_2014,machado_improving_2018}), the only caveat being that we first need to apply the reverse basis transformation obtained in Section~\ref{sec:samples} to the obtained solution. We did not perform this verification procedure since our main goal here is to showcase the speed of our approach for this type of problems. Note that the bounds we report for $d=16$ are slightly different from the bounds reported in \cite{machado_improving_2018} since their verification procedure increases the bounds by a configurable parameter $\varepsilon > 0$.

\begin{table}
    \centering
   {\footnotesize \begin{tabular}{cccc|cccc}
\toprule
$n$ & lower bound & $d$ & upper bound & $n$ & lower bound & $d$ & upper bound \\
\midrule
 3 & 12 & 16 & 12.368580 & 14 & 1606 & 16 & 3177.7812 \\
  &  & 17 & 12.364503 &  &  & 17 & 3176.4354 \\
  &  & 18 & 12.360782 &  &  & 18 & 3175.3519 \\
  &  & 19 & 12.357869 &  &  & 19 & 3174.7746 \\
  &  & 20 & 12.353979 &  &  & 20 & \underline{3174}.1890 \\
 \midrule
 4 & 24 & 16 & 24.056877 & 15 & 2564 & 16 & 4858.1937 \\
  &  & 17 & 24.053495 &  &  & 17 & 4856.4186 \\
  &  & 18 & 24.051431 &  &  & 18 & 4855.1064 \\
  &  & 19 & 24.048769 &  &  & 19 & 4854.3872 \\
  &  & 20 & 24.047205 &  &  & 20 & \underline{4853}.7561 \\
  \midrule
 5 & 40 & 16 & 44.981014 & 16 & 4320 & 16 & 7332.7695 \\
  &  & 17 & 44.976437 &  &  & 17 & 7329.8545 \\
  &  & 18 & 44.973846 &  &  & 18 & 7325.5713 \\
  &  & 19 & 44.971353 &  &  & 19 & 7322.5461 \\
  &  & 20 & 44.970252 &  &  & 20 & \underline{7320}.1068 \\
  \midrule
 6 & 72 & 16 & 78.187644 & 17 & 5346 & 16 & 11014.169 \\
  &  & 17 & 78.173268 &  &  & 17 & 11004.299 \\
  &  & 18 & 78.163358 &  &  & 18 & 10994.873 \\
  &  & 19 & 78.151981 &  &  & 19 & 10984.895 \\
  &  & 20 & 78.143569 &  &  & 20 & \underline{10978}.622 \\
  \midrule
 7 & 126 & 16 & 134.26988 & 18 & 7398 & 16 & 16469.091 \\
  &  & 17 & 134.21522 &  &  & 17 & 16445.457 \\
  &  & 18 & 134.17305 &  &  & 18 & 16431.764 \\
  &  & 19 & 134.13115 &  &  & 19 & 16418.296 \\
  &  & 20 & 134.10709 &  &  & 20 & \underline{16406}.358 \\
  \midrule
 9 & 306 & 16 & 363.67296 & 19 & 10668 & 16 & 24575.872 \\
  &  & 17 & 363.59590 &  &  & 17 & 24516.534 \\
  &  & 18 & 363.50742 &  &  & 18 & 24463.542 \\
  &  & 19 & 363.41738 &  &  & 19 & 24443.476 \\
  &  & 20 & 363.34567 &  &  & 20 & \underline{24417}.472 \\
  \midrule
 10 & 500 & 16 & 553.82278 & 20 & 17400 & 16 & 36402.676 \\
  &  & 17 & 553.57125 &  &  & 17 & 36296.753 \\
  &  & 18 & 553.38179 &  &  & 18 & 36250.908 \\
  &  & 19 & 553.21188 &  &  & 19 & 36218.806 \\
  &  & 20 & 553.05527 &  &  & 20 & \underline{36195}.348 \\
  \midrule
 11 & 582 & 16 & 869.23401 & 21 & 27720 & 16 & 53878.723 \\
  &  & 17 & 868.82650 &  &  & 17 & 53724.682 \\
  &  & 18 & 868.45366 &  &  & 18 & 53647.201 \\
  &  & 19 & 868.15131 &  &  & 19 & 53567.621 \\
  &  & 20 & \underline{868}.01070 &  &  & 20 & \underline{53524}.085 \\
  \midrule
 12 & 840 & 16 & 1356.5778 & 22 & 49896 & 16 & 81376.460 \\
  &  & 17 & 1356.1536 &  &  & 17 & 81085.186 \\
  &  & 18 & 1355.8837 &  &  & 18 & 80962.164 \\
  &  & 19 & 1355.4776 &  &  & 19 & 80860.092 \\
  &  & 20 & \underline{1355}.2976 &  &  & 20 & \underline{80810}.158 \\
  \midrule
 13 & 1154 & 16 & 2066.3465 & 23 & 93150 & 16 & 123328.40 \\
  &  & 17 & 2065.5348 &  &  & 17 & 122796.10 \\
  &  & 18 & 2064.9493 &  &  & 18 & 122657.49 \\
  &  & 19 & 2064.4859 &  &  & 19 & 122481.07 \\
  &  & 20 & \underline{2064}.0029 &  &  & 20 & \underline{122351}.67 \\
\bottomrule
\end{tabular}
}
   \vspace{1em}
    \caption{Three-point bounds for the kissing number problem in dimensions $3$-$23$. Dimension $8$ is omitted since there the linear programming bound is sharp. New records after rounding down to the nearest integer are underlined. Lower bounds are taken from \cite{machado_improving_2018}.}
    \label{tab:kissing_number_bounds}
\end{table}

\subsection{Binary sphere packing}
\label{sec:binary}

The $m$-sphere packing problem asks for the optimal sphere packing density in Euclidean space using spheres of $m$ prescribed sizes. For $m=1$ this is the well-known sphere packing problem, for which the linear programming bound by Cohn and Elkies has been used to prove the optimality of the $E_8$ root lattice in $\R^8$ and the Leech lattice in $\R^{24}$ \cite{cohn_new_2003,viazovska_sphere_2017,cohn_sphere_2017}. In \cite{de_laat_upper_2014}, de Laat, Oliveira, and Vallentin generalize this bound to the $m$-sphere packing problem, and they use this to compute bounds for the binary sphere packing problem in dimensions $2,\ldots,5$ with radii $(r/1000,1)$, $r = 200,\ldots,1000$. For smaller $r$ and dimensions higher than $5$ computations were prevented by numerical instabilities. Here we show this bound can be modeled as a univariate polynomial matrix program using matrices of size $m$. We use this to perform computations for a larger range of radii and higher dimensions, which allows us to make new qualitative observations about the behavior of these bounds.

Before stating the bound, we recall some definitions. A function $f \colon \R^n \to \R$ is a Schwarz function if it is infinitely differentiable, and if any derivative of $f(v)$ multiplied with any monomial in $v_1,\ldots,v_n$ is a bounded function. If $f \colon \R^n \to \R$ is a radial function, then for $t \ge 0$ we write $f(t)$ for the common value of $f$ on vectors $v$ of norm $t$. A matrix-valued Schwartz function $f \colon \R^n \to \R^{m \times m}$ is a matrix-valued functions whose every component function is a Schwartz function. We define the Fourier transform of such functions entrywise:
\[
\widehat f(v)_{rs} = \int f(w)_{rs} e^{-2\pi i  \langle v, w \rangle} \, dw.
\]

\begin{theorem}[\protect{\cite[Theorem 5.1]{de_laat_upper_2014}}]
\label{thm:sphere_packing}
Let $R_1,\ldots,R_m>0$. Suppose $f \colon \mathbb{R}^n \to \mathbb{R}^{m \times m}$ is a radial, matrix-valued Schwartz function that satisfies the following:
\begin{enumerate}
    \item The matrix $\widehat{f}(0)- W$ is positive semidefinite, where 
    \[
    W_{rs} = (\mathrm{vol}\, B(R_r))^{1/2}(\mathrm{vol}\, B(R_s))^{1/2}
    \]
    and $B(R)$ is the ball of radius $R$. 
    \item The matrix $\widehat{f}(t)$ is positive semidefinite for every $t>0$.
    \item  $f_{rs}(t)\leq 0$ whenever $t \geq R_r+R_s$, for $r,s = 1,\ldots,m$.
\end{enumerate}
Then the density of any sphere packing of spheres of radii $R_1,\ldots,R_m$ in the Euclidean space $\mathbb{R}^n$ is at most $\max \{f_{rr}(0): r=1,\ldots,m\}.$
\end{theorem}

Following \cite{de_laat_upper_2014} we parametrize $\widehat f$ as
\[
\widehat f(t) = \sum_{k=0}^d A^{(k)} t^{2k} e^{-\pi t^2},
\]
for some symmetric matrices $A^{(0)},\ldots,A^{(d)}$. By \cite[Lemma 5.2]{de_laat_upper_2014} we then have
\[
f(t) = \sum_{k=0}^d A^{(k)} \frac{k!}{\pi^k}L_k^{n/2-1}(\pi t^2)e^{-\pi t^2},
\]
where $L_k^{n/2-1}$ is the degree $k$ Laguerre polynomial with parameter $n/2-1$. Because positive factors do not influence the positivity, the Gaussians can be ignored, resulting in the following polynomial matrix program (where some of the constraints use only degree $0$ polynomials):
\begin{mini*}
    {}{M}{}{}
    \addConstraint{-W + A^{(0)}}{ \succeq 0}{}
    \addConstraint{\sum_{k=0}^d A^{(k)} x^{k} }{\succeq 0 \text{ on } \R_+}{}
    \addConstraint{- \sum_{k=0}^d A^{(k)}_{rs} \frac{k!}{\pi^k} L_k^{n/2-1}(\pi x)}{ \geq 0 \text{ on } S(G_{rs}),}{ \quad \text{ for } 1\leq r \leq s \leq m}
    \addConstraint{M - \sum_{k=0}^{d} A^{(k)}_{rr} \frac{k!}{\pi^k} L_k^{n/2-1}(0)}{\geq 0,}{\quad\text{ for } 1 \leq r \leq m,}
\end{mini*}
where we used the transformation $x = t^2$, and define $G_{rs} = \{x-(R_r+R_s)^2\}$.

The plots in \cite{de_laat_upper_2014} suggest that the binary sphere packing bounds become very bad as the ratio of the radii $r$ tends to $0$. Using the extra data we collect, we see the bounds are not convex in $r$ and in fact the bounds seem to become very good as $r$ tends to $0$. For dimension $24$, the plot in Figure~\ref{fig:bin_sphere_packing_n=24} seems to suggest that as the ratio of the radii $r$ tends to zero the binary sphere packing bound converges to $\Delta + (1-\Delta)\Delta$, where $\Delta$ denotes the optimal sphere packing density. By \cite{de_laat_optimal_2016,cohn_sphere_2017} this is the optimal limiting binary sphere packing density. As the computations for dimension $23$ suggest (see Figure~\ref{fig:bin_sphere_packing_n=23}), the bound more generally may go to $\delta + \delta (1-\delta)$, where $\delta$ is the optimal value of the Cohn-Elkies linear programming bound.

In dimension $2$, the best known upper bound for the binary sphere packing problem is due to Florian \cite{florian_ausfullung_1960}. In \cite{de_laat_upper_2014}, the binary sphere packing bound was calculated for $r\ge 0.2$, and in this regime, the bound is worse than Florian's bound. We compute the bound for $r \geq 0.035$, which is enough to show the bound improves on Florian's bound for small $r$; see Figure \ref{fig:bin_sphere_packing_n=2}. 

Since the binary sphere packing density only depends on the ratio $r = R_1/R_2$ of the radii of the spheres, we may scale both radii by a constant factor $s > 0$ for the calculations. We observed that using scaled radii instead of $(r,1)$ can lead to better numerical conditioning of the problem and to better bounds. The difference can be especially large for small $r$, and decreases when increasing $r$. For example, in dimension 2 with radii $(1/10, 1)$, the bound for degree $d = 31$ is $1.155$ without scaling and $0.9697$ with scaling factor $s = 1.35$.

\begin{figure}
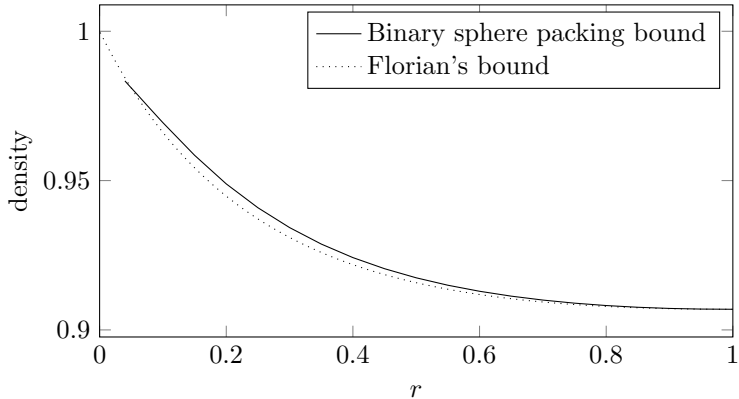

    \centering
    \include{figures/bin_sphere_n=2}
    \vspace{-2em}
    \caption{The binary sphere packing bound in dimension $n=2$ and Florian's bound.}
    \label{fig:bin_sphere_packing_n=2}
\end{figure}

In dimensions $8$ and $24$, we could compute the bound for $r \geq 0.15$ respectively $r \geq 0.3$, see Figures \ref{fig:bin_sphere_packing_n=8} and \ref{fig:bin_sphere_packing_n=24}. This required degrees 71; for small $r$ we computed the bounds with degree $d=91$ to make sure that increasing the degree would not change the plot visibly. We scaled the radii with $s=19/10$. In Figure \ref{fig:bin_sphere_packing_n=24}, we added a dotted line indicating the optimal sphere packing density $\Delta_{24} = \pi^{12}/12!$ \cite{cohn_sphere_2017}, and a dashed line indicating the optimal limiting density. In dimensions $2$ and $8$ we omitted these lines since the curve of the bound is less clear in those dimensions; in dimension 2 the bound is still convex, and in dimension $8$ it is unclear how fast the bound flattens.

\begin{figure}
    \centering
    \begin{tikzpicture}
\begin{axis}[xmin={0.0}, xmax={1.0}, width={10cm}, height={6cm},xlabel ={$r$},ylabel={density},ylabel style = {yshift={-0.5em}}]
    \addplot[black]
        table[row sep={\\}]
        {
            x  y  \\
            0.21  0.39102039430527696  \\
            0.22  0.38799651443839284  \\
            0.23  0.3849136718015576  \\
            0.24  0.38176404307426015  \\
            0.25  0.3785488390115391  \\
            0.26  0.3752668401809887  \\
            0.27  0.3719079433253878  \\
            0.28  0.3684718297031769  \\
            0.29  0.3649599121076039  \\
            0.3  0.36136564117314063  \\
            0.31  0.357684368543368  \\
            0.32  0.3539064165423956  \\
            0.33  0.3500131751412818  \\
            0.34  0.3459776105495515  \\
            0.35  0.34176133364435324  \\
            0.36  0.33730978362332886  \\
            0.37  0.33290112096569263  \\
            0.38  0.32870616959094523  \\
            0.39  0.3247164483170702  \\
            0.4  0.3209206098689275  \\
            0.45  0.3045656706865096  \\
            0.5  0.2918498997756539  \\
            0.55  0.2819489806595425  \\
            0.6  0.2742383370635763  \\
            0.65  0.268252860004393  \\
            0.7  0.26364381256114566  \\
            0.75  0.26014755077984775  \\
            0.8  0.2575627123352037  \\
            0.85  0.25573357607931885  \\
            0.9  0.25453728625199135  \\
            0.95  0.25387611416646644  \\
            1.0  0.2536706910178864  \\
        }
        ;
\end{axis}
\end{tikzpicture}
    \vspace{-2em}
    \caption{Binary sphere packing upper bounds in dimension $n=8$.}
    \label{fig:bin_sphere_packing_n=8}
\end{figure}
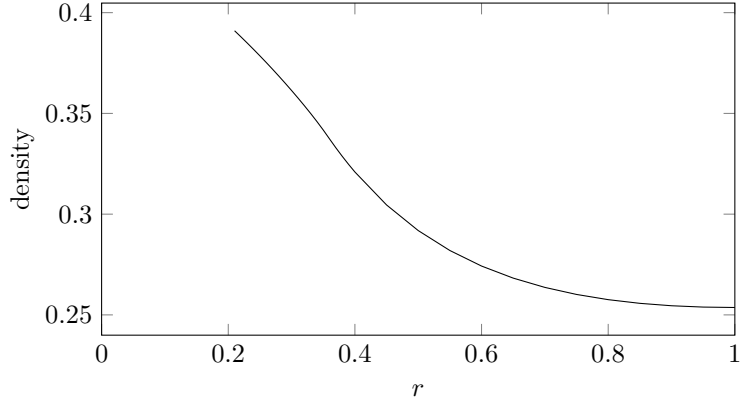

\begin{figure}
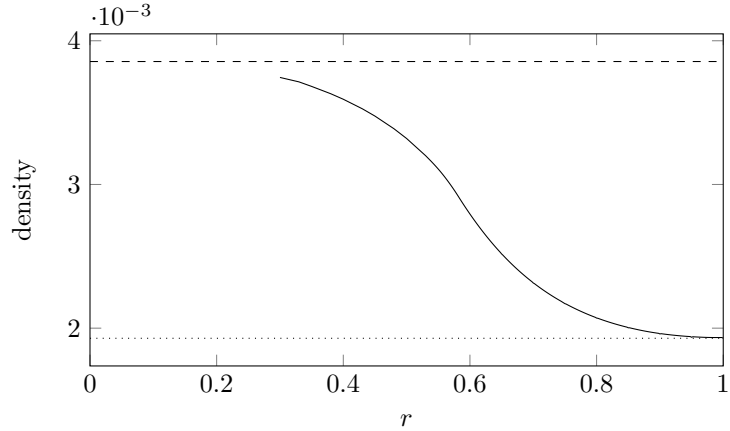

    \centering
    \include{figures/bin_sphere_n=24}
    \vspace{-2em}
    \caption{The binary sphere packing bound in dimension $24$. The dotted line is the maximum density of single sphere packings and the dashed line the optimal density when $r$ tends to $0$.}
    \label{fig:bin_sphere_packing_n=24}
\end{figure}

\begin{figure}
    \centering
    \begin{tikzpicture}
\begin{axis}[xmin={0.0}, xmax={1.0}, width={10cm}, height={6cm},xlabel ={$r$},ylabel={density},ylabel style = {yshift={-1em}}]
    \addplot[black]
        table[row sep={\\}]
        {
            x  y  \\
            0.3  0.0051735730984719475  \\
            0.31  0.005159104267784212  \\
            0.32  0.005140363199237237  \\
            0.33  0.005120748543375096  \\
            0.34  0.0050998900819721775  \\
            0.35  0.005078234056868072  \\
            0.36  0.005055111319346886  \\
            0.37  0.005030706785415098  \\
            0.38  0.005004942307955353  \\
            0.39  0.00497761386688834  \\
            0.4  0.004948703808948707  \\
            0.41  0.004918171718994326  \\
            0.42  0.0048859186598346005  \\
            0.43  0.00485178311610289  \\
            0.44  0.004815690868965111  \\
            0.45  0.004777578462588478  \\
            0.46  0.004737347983348993  \\
            0.47  0.004694768512846661  \\
            0.48  0.00464967565120558  \\
            0.49  0.0046018447830677095  \\
            0.5  0.004551043515527327  \\
            0.53  0.004381756484598824  \\
            0.55  0.004244380571978182  \\
            0.58  0.003983489052123834  \\
            0.6  0.0038087471559178844  \\
            0.63  0.003580729447091603  \\
            0.65  0.0034500974640333886  \\
            0.68  0.0032828334919343884  \\
            0.7  0.003188267141362189  \\
            0.73  0.003068452799169412  \\
            0.75  0.0030014018353830677  \\
            0.78  0.002917336210293085  \\
            0.8  0.0028710167201489362  \\
            0.83  0.0028139763229120293  \\
            0.85  0.002783083131685066  \\
            0.88  0.0027465519666045133  \\
            0.9  0.002728190489916205  \\
            0.93  0.0027078820640775905  \\
            0.95  0.0026989494122374716  \\
            0.98  0.0026922778731953336  \\
            1.0  0.002691994269693558  \\
        }
        ;
    \addplot[black, dashed]
        table[row sep={\\}]
        {
            \\
            0.0  0.0053767417062390525  \\
            1.0  0.0053767417062390525  \\
        }
        ;
    \addplot[black, dotted]
        table[row sep={\\}]
        {
            \\
            0.0  0.002691994269693558  \\
            1.0  0.002691994269693558  \\
        }
        ;
\end{axis}
\end{tikzpicture}
    \vspace{-2em}
    \caption{The binary sphere packing bound in 23 dimensions. The horizontal axis represents the ratio between the radii of the small and the large sphere, and the vertical axis the bound.}
    \label{fig:bin_sphere_packing_n=23}
\end{figure}
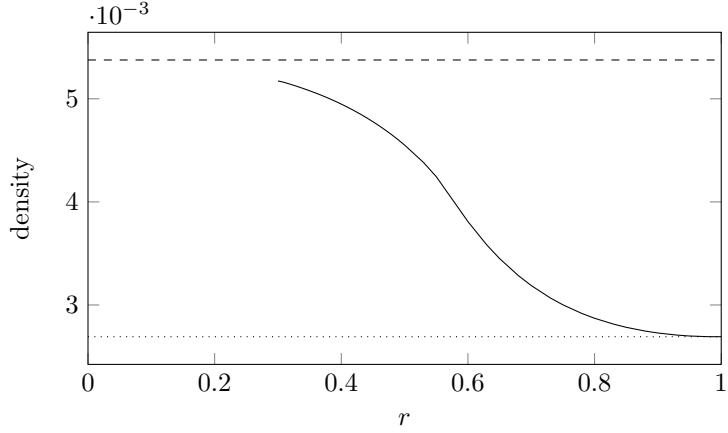

\begin{appendix}

\section{Symmetric description of a symmetric semialgebraic set} \label{appendix:sym}

In this appendix we generalize the argument from \cite[Lemma 3.1]{machado_improving_2018} to arbitrary finite groups, to show that an invariant semialgebraic set is the semialgebraic set of invariant polynomials.

Let $\Gamma$ be a finite group with a linear action on $\R^n$ and consider the linear action on $\R[x]$ by $\gamma p(x) = p(\gamma^{-1} x)$. Let $G$ be a finite set of polynomials such that the semialgebraic set 
$
\mathcal S(G) = \big\{x \in \R^n : g(x) \geq 0 \text{ for } g\in G\big\}
$
is $\Gamma$-invariant. We will show $\mathcal S(G)$ is the semialgebraic set of finitely many invariant polynomials.
Since $\mathcal S(G) = \bigcap_{g\in G} \mathcal S(\Gamma g)$, it is sufficient to show this for  $\mathcal S(\Gamma g)$.

For $g \in G$ we define the $\Gamma$-invariant polynomials
\begin{equation*}
    g_k(x) = \sum_{\substack{\Gamma' \subseteq \Gamma \\ |\Gamma'| = k}} \prod_{\gamma \in \Gamma'} \gamma g(x)
\end{equation*}
for $k=1,\ldots, |\Gamma|$. Then, 
\[
\mathcal S(\Gamma g) \subseteq \mathcal S(\{g_1,\ldots,g_{|\Gamma|}\})
\]
since $\gamma g(x) \geq 0$ for all $\gamma \in \Gamma$ implies $g_k(x) \geq 0$ for all $k$. 

For the reverse inclusion we suppose we have a point $x$ with $g(x) < 0$ and $g_k(x) \geq 0$ for $k \geq 2$. We will argue  that $g_1(x) < 0$. 

Define
\begin{equation*}
T_k = \sum_{\substack{\Gamma' \subseteq \Gamma \\ |\Gamma'| = k \\ e \not\in \Gamma'}} \prod_{\gamma \in \Gamma'} \gamma g(x),
\end{equation*}
where we denote the identity element of $\Gamma$ by $e$.
Then $T_k \leq 0$ for $k=|\Gamma|$, since there are no subsets of $\Gamma$ of size $|\Gamma|$ not containing the identity. If $T_k \leq 0$ for $k \geq 2$, then $g_k(x) = g(x)T_{k-1} + T_k \geq 0$ implies $g(x) T_{k-1} \ge 0$ and hence $T_{k-1} \leq 0$. By induction we then have $T_1 \leq 0$.
Hence,
\[
g_1(x) = \sum_{\gamma \in \Gamma} \gamma g(x) = g(x) + T_1 \leq g(x) < 0.
\]
So $\mathcal S(\Gamma g) = \mathcal S(\{g_1,\ldots,g_{|\Gamma|}\})$.

\section{Nonexistence of an invariant minimal unisolvent set} \label{appendix:min_unisolvent}

In this appendix we give an example in which an invariant minimal unisolvent set does not exist even though this is not apparent from the dimensions of the spaces and the orbit sizes.

Let $\Gamma = D_4$ be the dihedral group $\langle s, d : d^4=s^2=e, \, ds=sd^{-1}\rangle$ with the action on $\R^2$ defined by
\[
\theta(s)(x,y) = (y,x), \qquad
\theta(d)(x,y) = (-y,x).
\]
The invariant ring $\R[x,y]^\Gamma$ is generated by $\phi_1 = x^2+y^2$ and $\phi_2 = x^2y^2$.
Take $2d = 6$. Then $\dim \R[x,y]_{\leq 2d}^\Gamma = 6$, and $\dim \R[x,y]_{\leq 2d} = {2+6 \choose 2} = 28$.

Let $M$ be an invariant set of points with $|M| = \dim \R[x,y]_{\leq 2d} = 28$, and suppose $M$ is unisolvent. We will show that this leads to a contradiction.
By Lemma \ref{lem:R_minimal_unisolvent}, the set $R$ of representatives of the orbits is minimal unisolvent for $\R[x,y]^\Gamma_{\leq 2d}$, and thus has size $\dim \R[x,y]_{\leq 2d}^\Gamma = 6$.

The orbits of $\Gamma$ acting on $\R^2$ have size 1 (the origin $(0,0)$), size $4$ (generated by points $(x,y)$ with $|x| = |y|$, $x = 0$, or $y=0$) and size $8$ (generated by points $(x,y)$ with $|x| \neq |y|$ and $x,y \neq 0$). Since there is only one orbit of odd size, and $|M| = \dim \R[x,y]_{\leq 2d}$ is even, all orbits of $M$ have even size.  From the equations 
\[
4k + 8l = |S| = 28 \quad \text{and} \quad  k + l = |R| = 6
\]
we know that there are $k=5$ points in $R$ corresponding to orbits of size $4$, and $l = 1$ point corresponding to an orbit of size $8$. 

Let $r$ be the norm of a point corresponding to the orbit of size $8$. Note that $\|(x,y)\|^2 = \phi_1(x,y)$ is invariant under the action of $\Gamma$, hence all points in an orbit have the same norm. Define the polynomial 
\[
p(x,y) = xy(x+y)(x-y)(x^2+y^2-r^2).
\]
Then $p(x,y) = 0$ for all $(x,y) \in M$, and $\deg p = 6 = 2d$ so $p\in \R[x,y]_{\leq 2d}$, but $p \neq 0$. Hence $M$ is not unisolvent.

\end{appendix}

\section*{Acknowledgements}

We thank Henry Cohn and Fernando Oliveira for their helpful comments. We also thank the anonymous reviewers whose suggestions helped improve the paper.

\bibliographystyle{amsinit}      
\providecommand{\bysame}{\leavevmode\hbox to3em{\hrulefill}\thinspace}
\providecommand{\MR}{\relax\ifhmode\unskip\space\fi MR }
\providecommand{\MRhref}[2]{%
  \href{http://www.ams.org/mathscinet-getitem?mr=#1}{#2}
}
\providecommand{\href}[2]{#2}


\end{document}